\let\expandafter\oldproof\csname\string\proof\endcsname
\let\oldendproof\endproof
\renewenvironment{proof}[1][\proofname]{%
	\oldproof[\bf #1]%
}{\oldendproof}
\newcommand*\samethanks[1][\value{footnote}]{\footnotemark[#1]}
\theoremstyle{plain}
\newtheorem{lemma}{Lemma}[section]
\newtheorem{theorem}[lemma]{Theorem}
\newtheorem{claim}[lemma]{Claim}
\newtheorem{proposition}[lemma]{Proposition}
\newtheorem{definition}[lemma]{Definition}
\newcommand{\mb}{\mathbb}
\newcommand{\mc}{\mathcal}
\DeclareMathOperator{\spann}{span}
\definecolor{RED}{rgb}{1,0,0}\definecolor{BLUE}{rgb}{0,0,1} 
\newcommand{\abs}[1]{\left\vert {#1} \right\vert}
\newcommand{\floor}[1]{\left\lfloor {#1} \right\rfloor}
\newcommand{\ceil}[1]{\left\lceil {#1} \right\rceil}
\newcommand{\dist}{\text{dist}}
\DeclareMathOperator{\pd}{pd}
\DeclareMathOperator{\tpd}{tpd}
\DeclareMathOperator{\conv}{conv}
\newcommand{\EE}{{\mathbb{E}}}
\newcommand{\free}{{\varphi}}
\newcommand{\fr}{{\widetilde{\free}}}
\newcommand{\FF}{\mathcal F}
\newcommand{\KK}{\mathcal K}
\newcommand{\restrict}[2]{{{#1}\big\vert_{\scriptscriptstyle {#2}}}}
\global\long\def\zj#1{\textcolor{cyan}{\textbf{[ZJ comments:} #1\textbf{]}}}
\title{ \vspace{-0.9cm} The Helly number of Hamming balls and related problems }
\author{
Noga Alon
\thanks{Princeton University,
Princeton, NJ, USA
and
Tel Aviv University, Tel Aviv,
Israel.
Email: {\tt nalon@math.princeton.edu}.
Research supported in part by
NSF grant DMS-2154082.}
\and Zhihan Jin
\thanks{
ETH, Z\"urich, Switzerland.
Email: {\tt \{zhihan.jin,benjamin.sudakov\}@math.ethz.ch}.
Research supported in part by  SNSF grant   200021-228014.
} 
\and 
Benny Sudakov\samethanks
}
\date{}
\begin{document}

\maketitle

\begin{abstract}
    We prove the following variant of Helly's classical theorem 
	for Hamming balls with a bounded radius.
    For $n>t$ and any (finite or infinite) set $X$, if in a family 
	of Hamming balls of radius $t$ in $X^n$, every subfamily of at 
	most $2^{t+1}$ balls have a common point, so do all members of the family.
    This is tight for all $|X|>1$ and all $n>t$. The proof of the 
	main result is based on a novel variant of the so-called 
	dimension argument, which allows one to prove upper bounds 
	that do not depend on the dimension of the ambient space.
    We also discuss several related questions and connections to problems and results in extremal finite set theory and graph theory. 
\end{abstract}

\section{Introduction}

\subsection{Helly-type problems for the Hamming balls}

Helly's theorem, proved by Helly more than 100 years ago (\cite{helly23}),
is a fundamental result in Discrete Geometry. It asserts that a finite
family
of convex sets in the $d$-dimensional Euclidean 
space has a nonempty intersection if  every subfamily of
at most $d+1$ of the sets has a nonempty intersection. 

This theorem, in which the number $d+1$ is tight, led to numerous fascinating variants and extensions in geometry and beyond (c.f., e.g., \cite{Ec93,BK22} for two survey articles). 
It motivated the definition of the {\em Helly number} $h(\FF)$ for a general family $\FF$ of sets. This 
is the smallest integer $h$ such that for any finite subfamily 
$\KK$ of $\FF$, if every subset of at most $h$ members of $\KK$ has a nonempty intersection then all sets in $\KK$ have a nonempty intersection.
The classical theorem of Helly asserts that the Helly number of the family of
convex sets in $\mb{R}^d$ is $d+1$. An additional example of a 
known Helly number is the Theorem of Doignon \cite{Do73} that asserts that the Helly number of convex lattice sets in $d$-space, that is sets of the form $C \cap Z^d$ where $C$ is a convex set in $\mb{R}^d$, is $2^d$. 
A more combinatorial example is the fact that the Helly number of the collection of (sets of vertices of) subtrees of any tree is $2$.

In the spaces $X^n$ for finite or infinite $X$, the {\em Hamming balls} 
are among the most natural objects to study.
The {\em Hamming distance} between $p,q\in X^n$, denoted by $\dist(p,q)$, is the number of coordinates where $p$ and $q$ differ,
and the {Hamming ball} of radius $t$ centered at $x \in X^n$, denoted by 
$B(x,t)$, is the set of all points $p \in X^n$ that satisfy 
$\dist(p,x) \le t$.
Note that every Hamming ball of radius $t$ is the whole space if $n \le t$.
Hence, we may and will always assume that $n \geq t+1$.
Our main result in the present paper is the determination of the Helly number of the family of all Hamming balls of radius $t$ in the space $X^n$, where $X$ is an arbitrary (finite or infinite) set. 
\begin{theorem}
\label{t11}
    Let $n > t \ge 0$ and $X$ be any set of cardinality $|X|\ge 2$.
    The Helly number $h(n,t;X)$ of the family of all Hamming balls of radius $t$ in $X^n$ is exactly $2^{t+1}$.
\end{theorem}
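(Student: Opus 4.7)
The plan is to prove the two directions of Theorem~\ref{t11} separately: an explicit construction gives the lower bound $h(n,t;X) \geq 2^{t+1}$, and a minimal-counterexample argument paired with a novel injective encoding gives the matching upper bound.

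\textbf{Lower bound construction.} Fix two distinct elements $0,1 \in X$ and, for each $c \in \{0,1\}^{t+1}$, set $\tilde c := (c,0,\dots,0) \in X^n$. Consider the family of $2^{t+1}$ balls $\{B(\tilde c, t) : c \in \{0,1\}^{t+1}\}$. For any $y \in X^n$ one has $\max_{c \in \{0,1\}^{t+1}} \dist(y|_{[t+1]}, c) = t+1$, since for each coordinate $k \in [t+1]$ one can always pick $c_k \in \{0,1\}$ with $c_k \neq y_k$; hence some ball of the family excludes $y$ and the full intersection is empty. Conversely, if the single ball $B(\tilde{c^*}, t)$ is removed, the antipode $y^* := (\overline{c^*},0,\dots,0)$ satisfies $\dist(y^*, \tilde c) = (t+1) - \dist(c,c^*) \leq t$ for every $c \neq c^*$, so $y^*$ lies in all remaining balls. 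This yields $h(n,t;X) \geq 2^{t+1}$.

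\textbf{Upper bound.} Suppose for contradiction that there is a finite minimal family $\FF = \{B(c_1,t),\dots,B(c_m,t)\}$ with $\bigcap \FF = \emptyset$ but every proper subfamily of $\FF$ has a common point; then $m > 2^{t+1}$, and for each $i$ we may pick a witness $x_i \in \bigcap_{j \neq i} B(c_j,t)$ with $\dist(x_i,c_i) \geq t+1$ and $\dist(x_i,c_j) \leq t$ for every $j \neq i$. Note that the $x_i$ are pairwise distinct, since a shared witness would lie in every ball. The aim is to derive $m \leq 2^{t+1}$, a contradiction.

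The core step --- the novel variant of the dimension argument --- is an injective encoding of the index set $[m]$ into a universe of size $2^{t+1}$. For each $i$ the disagreement set $A_i := \{k : x_{i,k} \neq c_{i,k}\}$ satisfies $|A_i| \geq t+1$, so we may pick $T_i \subseteq A_i$ of size exactly $t+1$. A key structural identity, obtained by summing pointwise-disagreement indicators over all coordinates and comparing $\dist(x_i,c_j) \leq t < \dist(x_i,c_i)$, is that for every $j \neq i$ strictly more than half of the coordinates of $\{k : c_{i,k} \neq c_{j,k}\}$ lie in $T_i$; in particular $c_i$ and $c_j$ disagree on at least one coordinate of $T_i$. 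I would then encode $i$ by a $(t+1)$-bit type read off from the behavior on $T_i$ of a cleverly chosen reference --- for instance another witness $x_j$, or a common point of a well-chosen $2^{t+1}$-subfamily. The crux is to arrange the choices of the $x_i$ and $T_i$, perhaps via an extremal selection of the witnesses or a reduction to the tight case $n=t+1$ (where each ball is the complement of a single point, and the $2^{t+1}$ excluded points naturally parametrize the types), so that this map $[m] \to \{0,1\}^{t+1}$ is genuinely injective, forcing $m \leq 2^{t+1}$ and contradicting $m > 2^{t+1}$. Making the injectivity step robust to arbitrary (possibly infinite) $X$ and arbitrary ambient dimension $n$ is where I expect the main technical difficulty of the proof to reside.
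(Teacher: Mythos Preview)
Your lower bound is correct and essentially identical to the paper's.

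The upper bound, however, has a genuine gap. Your setup --- passing to a minimal family and extracting witnesses $x_i$ with $\dist(x_i,c_i)\ge t+1$ and $\dist(x_i,c_j)\le t$ for $j\ne i$ --- is exactly the right reduction, and matches the paper. But from there on the argument is only a sketch. The ``structural identity'' you state, that strictly more than half of the coordinates of $\{k:c_{i,k}\ne c_{j,k}\}$ lie in $T_i$, is only valid in the binary case with $|A_i|=t+1$ (where it reduces to $|T_i\triangle D_{ij}|\le t$); for general $X$ and for $|A_i|>t+1$ it fails, since coordinates of $D_{ij}$ may sit in $A_i\setminus T_i$ and $x_{i,k}$ may equal neither $c_{i,k}$ nor $c_{j,k}$. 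More seriously, even granting the identity you never actually build the injection: the sets $T_i$ vary with $i$, so ``reading off a $(t{+}1)$-bit type on $T_i$'' does not land in a common target of size $2^{t+1}$, and you explicitly defer this step as ``where the main technical difficulty resides.'' That \emph{is} the theorem; without it nothing has been proved.

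The paper's route is quite different from what you are attempting. Rather than an injective combinatorial encoding, it runs a linear-algebra argument: for each $i$ and each ``compatible'' pair $(I_1,I_2)$ (roughly, $I_1$ a large subset of the disagreement set $D_i$ and $I_2$ an arbitrary subset of its complement) it defines a multilinear polynomial $f_{i,I_1,I_2}(x)=\prod_{k\in I_1\cup I_2}(x_k-a_{i,k})\prod_{k\in D_i\setminus I_1}(x_k-b_{i,k})$, and then proves all these polynomials are linearly independent by evaluating at carefully chosen points $x_{i,I_1,I_2}$ and arguing via a total order on the index sets. The count of compatible pairs per $i$ is $V_{t+s_i,s_i}\cdot 2^{n-(t+s_i)}$, and comparing with $\dim(\text{multilinear polynomials})=2^n$ gives $\sum_i V_{t+s_i,s_i}/2^{t+s_i}\le 1$, hence $m\le 2^{t+1}$. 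The role of the subsets $I_2\subseteq [n]\setminus D_i$ is precisely what makes the bound independent of $n$ --- this is the ``novel variant of the dimension argument'' you allude to but do not supply.
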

Crucially, $h(n,t;X)$ depends only on $t$.
We note that the special case $X=\{0,1\}$ of this theorem settles a 
recent problem raised in \cite{RST23}, where the question is motivated by an application in learning theory. 
See also \cite{BHMZ20} for more about the connection between Helly numbers and questions in computational learning.

Another fundamental result in Discrete Geometry is Radon’s
theorem~\cite{Rad21} which states that any set of $d+2$ points in the $d$-dimensional Euclidean space can be partitioned into two parts whose convex hulls intersect.
This was first obtained by Radon in 1921 and was used to prove Helly's theorem; see also~\cite{Ec93,BK22}. Using our methods we can prove the following strengthening of Theorem \ref{t11}. As we explain below it can be viewed as Radon's theorem for the Hamming balls.
\begin{theorem}\label{thm: general helly}
    Let $n>t \ge 0$ and $X$ be any set of cardinality $|X|\ge 2$.
    If $B_1,B_2,\dots,B_m$ are Hamming balls in $X^n$ of radius $t$, then there exists $I\subseteq [m]$ of size at most $2^{t+1}$ such that $\bigcap_{i=1}^m B_i=\bigcap_{i\in I} B_i$.
\end{theorem}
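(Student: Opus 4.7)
The plan is to reduce Theorem~\ref{thm: general helly} to the same combinatorial configuration that underlies the proof of Theorem~\ref{t11}. First, I would perform a greedy pruning: iteratively discard any ball $B_i$ from the family whenever its removal does not change the common intersection of the current subfamily. Since the family is finite, this terminates in at most $m$ steps, producing an irredundant subfamily indexed by some $I \subseteq [m]$ with $\bigcap_{i \in I} B_i = \bigcap_{i=1}^m B_i$. It then suffices to bound $|I| \le 2^{t+1}$.

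By irredundancy, for every $i \in I$ there exists a point $p_i \in \bigcap_{j \in I \setminus \{i\}} B_j$ with $p_i \notin B_i$. Writing $B_i = B(x_i, t)$, this produces a system of centers $\{x_i\}_{i \in I}$ and witnesses $\{p_i\}_{i \in I}$ such that $\dist(x_i, p_j) \le t$ whenever $j \neq i$ while $\dist(x_i, p_i) \ge t+1$; equivalently, $p_j \in B_i$ iff $j \neq i$.

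Now observe that this is exactly the configuration that arises from a \emph{minimal empty-intersection} counterexample in the usual derivation of a Helly-type bound: given any collection of balls with empty common intersection but with every proper subcollection having a common point, the points witnessing non-emptiness of the proper subcollections satisfy the same incidence pattern $p_j \in B_i \iff j \ne i$. Thus, provided the dimension argument behind Theorem~\ref{t11} bounds the size of \emph{any} such witness system by $2^{t+1}$ — not only one obtained from a counterexample with empty intersection — it applies to our $I$ verbatim and yields $|I| \le 2^{t+1}$, completing the proof.

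The main obstacle I anticipate is verifying that the dimension-style argument of Theorem~\ref{t11} uses only the incidence data $p_j \in B_i \iff j \neq i$, and not any additional property such as $\bigcap_{i \in I} B_i = \emptyset$. This is almost certain to be the case, since such arguments typically attach to each index $i \in I$ a vector (a polynomial, a characteristic function, or a truth-table) in an auxiliary $2^{t+1}$-dimensional space using only the Hamming distances between $x_i$ and the $p_j$'s, and show these vectors are linearly independent. If a hidden non-emptiness assumption did enter the argument, one could try to eliminate it by adjoining an auxiliary ball centered at a point of $\bigcap_{i \in I} B_i$ (when this intersection is non-empty) or by working inside a coordinate restriction that forces the intersection to be empty; but I expect such a workaround will not be needed.
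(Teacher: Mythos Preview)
Your proposal is correct and matches the paper's own argument essentially verbatim: the paper reduces Theorem~\ref{thm: general helly} to the bound $f(t;X)\le 2^{t+1}$ (Theorem~\ref{thm: main result for f}) by taking a minimal family, extracting for each index a witness point $b_j\in\bigcap_{i\ne j}B_i\setminus B_j$, and pairing it with the center $a_j$, producing exactly the incidence pattern you describe. Your anticipated obstacle does not arise, since the dimension argument (Theorem~\ref{thm: general theorem for f}) uses only the distance conditions $\dist(a_i,b_i)\ge t+1$ and $\dist(a_i,b_j)\le t$ for $i\ne j$, never the emptiness of the common intersection.
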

\noindent
It is easy to see, that the upper bound of the Helly number $h(n,t;X)\le 2^{t+1}$ follows from this result by taking $\bigcap_{i=1}^m B_i=\emptyset$.
To explain the connection of this statement with Radon's theorem we 
briefly discuss the notion of abstract {\em convexity spaces}.

An (abstract) convexity space is a pair $(U,\mc{C})$ where $U$ is a nonempty set and $\mc{C}$ is a family of subsets of $U$ satisfying the following properties. Both $\emptyset$ and $U$ are in $\mc{C}$ and the intersection of any subfamily of sets in $\mc{C}$ is a set in $\mc{C}$. 
One natural example is the standard Euclidean convexity space $(\mb{R}^d,\mc{C}^d)$ where $\mc{C}^d$ is the family of all convex sets in $\mb{R}^d$.
We refer the readers to the book by van de Vel~\cite{Vel93} for a comprehensive overview of the theory of convexity spaces.

In a convexity space $(U, \mc{C})$, the members of $\mc{C}$ are called {\em convex sets}. 
Given a subset $Y \subseteq U$, the {\em convex hull} of $Y$, denoted by $\conv(Y)$, is the intersection of all the convex sets containing $Y$, that is the minimal convex set containing $Y$.
The {\em Radon number} of $(U,\mc{C})$, denoted by $r(\mc{C})$, is the smallest integer $r$ (if it exists) such that any subset $P\subseteq X$ of at least $r$ points can be partitioned into two parts $P_1$ and $P_2$ such
that $\conv(P_1)\cap \conv(P_2)\not = \emptyset$.
For instance, $r(\mc{C}^d)=d+2$ for the family of convex sets in $\mc{R}^d$.
It is well-known that the Helly number is smaller than the Radon number if the latter is finite; see~\cite{Lev51}.

In our case, $U=X^n$, and $\mc{C}_H$ contains all the intersections of Hamming balls of radius $t$. For simplicity we assume that $X$ is finite. Then, $(U,\mc{C}_H)$ is automatically a convexity space.
Moreover, one can check that all Hamming balls of radius at most $t$ are contained in $\mc{C}_H$.
We now show that in $(U,\mc{C}_H)$, the Helly number is $2^{t+1}$ 
and the Radon number is $2^{t+1}+1$.
By the discussion above, we know $r(\mc{C}_H)>h(\mc{C}_H)\ge h(n,t;X)\ge 2^{t+1}$, where the last equality follows from an easy example in \cref{prop: lower bound for helly}.
So, it suffices to show that $r(\mc{C}_H) \le 2^{t+1}+1$.
To this end, let $p_1,p_2,\dots,p_m$ be $m \ge 2^{t+1}+1$ points in $X^n$.
Recall that for any set of points $P\subseteq X^n$, $\conv(P)$ is the intersection of the Hamming balls containing $P$, that is the intersection of $B(q,t)$s over all $q \in \bigcap_{p\in P} B(p,t)$.
By \cref{thm: general helly}, there exists $I\subseteq [m]$ of size 
at most $2^{t+1}$ so that $\bigcap_{i=1}^m B(p_i,t)=\bigcap_{i\in I} B(p_i,t)$.
This means $\emptyset\neq I \neq [m]$ and $\conv((p_i)_{i=1}^m)=\conv((p_i)_{i\in I})$.
Hence, $\emptyset\neq\conv((p_i)_{i\notin I})\subseteq \conv((p_i)_{i=1}^m)=\conv((p_i)_{i\in I})$, i.e. $\conv((p_i)_{i\notin I})\cap\conv((p_i)_{i\in I})\neq \emptyset$.
This proves $r(\mc{C}_H)\le 2^{t+1}+1$, as needed.

In the original setting of convex sets in $\mb{R}^d$, the following two extensions of Helly's theorem received a considerable amount of attention. The fractional Helly theorem, first proved by Katchalski and Liu~\cite{KL79}, states that in a finite family of convex sets in $\mb{R}^d$, if an $\alpha$-fraction of the $(d+1)$-tuples of sets in this family intersect, then one can select a $\beta$-fraction of the sets in the family with a nonempty intersection.
The Hadwiger-Debrunner conjecture, also known as the $(p,q)$-theorem, was first proved by Alon and Kleitman~\cite{AK92}.
It states that for $p\ge q\ge d+1$, if among any $p$ convex sets in the family, $q$ of them intersect, then there is a set of $O_{d,p,q}(1)$ points in $\mb{R}^d$ such that every convex set in the family contains at least one of these points. 
See also \cite{BK22} for more recent variants and extensions.

One can ask for versions of fractional Helly and $(p,q)$ theorems in general convexity spaces as well. Moreover, it is known that finite Radon number implies the fractional Helly theorem~\cite{HL21} and in turn the fractional Helly theorem implies the $(p,q)$ theorem~\cite{AKMM02}, both for suitable parameters.
In the case of Hamming balls of radius $t$, one can use these general results together with the fact that $r(\mc{C}_H)=2^{t+1}+1$ to obtain the fractional Helly theorem, where $\ell$-tuples of Hamming balls are considered with $\ell \gg 2^{t+1}$, and the $(p,q)$-theorem where $p>q \ge \ell \gg 2^{t+1}$.
Such resulst are very far from being optimal and instead we will prove them directly with much better dependencies on $t$. Interestingly, for both of them, if $|X|=2$, we only need the information on pairs of Hamming balls. On the other hand, if $|X|=\infty$, we need the information on $(t+2)$-tuples of Hamming balls. In particular, the threshold to have both theorems for Hamming balls (of radius $t$) is either 2 or $t+2$, much smaller than the corresponding Helly number. This is very different from convex sets in $\mb{R}^d$, where the threshold to have both theorems is $d+1$, the same as the corresponding Helly number.

\subsection{Algebraic tools and set-pair inequalities}

The proof of \cref{thm: general helly} is based on a novel variant of the so-called dimension argument. 
Surprisingly, this variant allows us to prove some upper bounds that do not depend on the dimension of the ambient space.
We believe that this may have further applications. 
For the special case of binary strings, that is, $|X|=2$, we prove a stronger statement by a probabilistic argument.
For convenience, we define the following two functions $f(t,X)$ and $f'(t,X)$.
\begin{definition}\label{def: f and f'}
    Let $t \ge 0$ and $X$ be any set of cardinality $|X| \ge 2$.
    Define 
    \begin{itemize}
        \item $f(t;X)$ to be the maximum $m$ such that there exists $n>t$ 
		and $a_1,a_2,\dots,a_m,b_1,b_2,\dots,b_m\in X^n$ where $\dist(a_i,b_i)\ge t+1$ for all $i \in [m]$ and $\dist(a_i,b_j) \le t$ for all distinct $i,j\in [m]$;
        \item $f'(t;X)$ to be the maximum $m$ such that there exists $n>t$ and $a_1,a_2,\dots,a_m,b_1,b_2,\dots,b_m\in X^n$ where $\dist(a_i,b_i)\ge t+1$ for all $i\in [m]$ and $\dist(a_i,b_j)+\dist(a_j,b_i) \le 2t$ for all distinct $i,j\in [m]$.
    \end{itemize}
\end{definition}
The study of these functions can also be motivated by the well-known 
set-pair inequalities in extremal set theory.
The set-pair inequalities, initiated by Bollob\'as~\cite{Bollobas65}, 
play an important role in extremal combinatorics with applications
in the study of saturated (hyper)-graphs, $\tau$-critical hypergraphs, 
matching-critical hypergraphs, and more. See \cite{Tuza94,Tuza96}
for surveys.
A significant generalization of Bollob\'as' result
is due to F\"uredi~\cite{Furedi84}.
It states that if $A_1,A_2,\dots,A_m$ are sets of size $a$ and $B_1,B_2,\dots,B_m$ are sets of size $b$ such that $|A_i\cap B_i|\le k$ for all $i\in[m]$ and $|A_i\cap B_j|>k$ for $1\le i<j\le m$, then $m \le \binom{a+b-2k}{a-k}$, and this is tight.
Using this result, one can give a short argument that $f(t;X)$ is finite.
\begin{proposition}
    $f(t;X) \le \binom{2t+2}{t+1}$ for every $t \ge 0$ and every set $X$.
\end{proposition}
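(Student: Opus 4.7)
The plan is to apply F\"uredi's set-pair inequality, which is already stated in the paper, after encoding the vectors $a_i,b_i \in X^n$ as subsets of $[n]\times X$. Concretely, I would associate with each $i\in[m]$ the two sets
\[
A_i=\{(k,a_i(k)):k\in[n]\}\subseteq [n]\times X,\qquad B_i=\{(k,b_i(k)):k\in[n]\}\subseteq [n]\times X,
\]
each of size exactly $n$. The key identity is that for any $i,j$,
\[
|A_i\cap B_j|=|\{k\in[n]:a_i(k)=b_j(k)\}|=n-\dist(a_i,b_j),
\]
so Hamming-distance conditions translate directly into intersection-size conditions.

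Next I would rewrite the two defining inequalities of $f(t;X)$ in this language. The condition $\dist(a_i,b_i)\ge t+1$ becomes $|A_i\cap B_i|\le n-t-1$, and for distinct $i,j$ the condition $\dist(a_i,b_j)\le t$ becomes $|A_i\cap B_j|\ge n-t>n-t-1$. In particular the strict inequality $|A_i\cap B_j|>n-t-1$ holds for every pair $i\ne j$, which is even stronger than what F\"uredi's theorem requires (only the case $i<j$ is needed there).

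Finally I would apply F\"uredi's inequality with parameters $a=b=n$ and $k=n-t-1$, obtaining
\[
m\le \binom{a+b-2k}{a-k}=\binom{2n-2(n-t-1)}{n-(n-t-1)}=\binom{2t+2}{t+1}.
\]
Note that this bound is independent of $n$ and of $|X|$ (even when $X$ is infinite, each $A_i$ and $B_i$ is still a finite set, so F\"uredi's theorem applies without modification). I do not foresee a genuine obstacle here: once the encoding $x\mapsto\{(k,x(k))\}$ is in place, the only thing to check is the arithmetic of $a+b-2k=2t+2$ and $a-k=t+1$, after which the inequality is immediate.
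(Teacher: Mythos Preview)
Your proof is correct and essentially identical to the paper's own argument: the same encoding $A_i=\{(k,a_{i,k}):k\in[n]\}$, $B_i=\{(k,b_{i,k}):k\in[n]\}$, the same identity $|A_i\cap B_j|=n-\dist(a_i,b_j)$, and the same application of F\"uredi's inequality with $a=b=n$, $k=n-t-1$.
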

\begin{proof}
    Suppose, for some $n > t$, that there are $a_1,a_2,\dots, a_m,b_1,b_2,\dots,b_m \in X^n$ satisfying $\dist(a_i,b_i)$ $\ge t+1$ for all $i$ and $\dist(a_i,b_j) \le t$ for all distinct $i,j\in[m]$.
    For each $i\in [m]$, let $A_i:=\{(k,a_k): k=1,2,\dots,n\}$ and $B_i:=\{(k,b_k): k=1,2,\dots,n\}$ be sets of $n$ pairs.
    Observe that $|A_i\cap B_j|+\dist(a_i,b_j)=n$ for all $i,j\in[m]$.
    It holds that $|A_i\cap B_i|\le n-t-1$ for all $i$ and $|A_i\cap B_j|\ge n-t$ for all distinct $i,j$.
     Since $|A_i|=|B_i|=n$, the above result of F\"uredi implies $m \le \binom{2n-2(n-t-1)}{n-(n-t-1)}=\binom{2t+2}{t+1}$, as desired.
\end{proof}

We note that $f(t;X) \le f'(t;X)$ holds by definition and that any upper bound on $f(t;X)$ implies the corresponding bound in \cref{thm: general helly}. 
Indeed, suppose $\mc{B}=\{B_1,B_2,\dots,B_m\}$ is a minimal collection 
of Hamming balls in $X^n$ of radius $t$ such for any $I\subseteq [m]$ of size at most $f(t;X)$, $\bigcap_{i=1}^m B_i\neq \bigcap_{i\in I} B_i$.
This means $m>f(t;X)$ and $\bigcap_{i} B_i\neq \bigcap_{i\neq j} B_i$ holds for all $j$ (using that $\mc{B}$ is minimal).
So, for each $j \in [m]$, there exists $b_j \in \bigcap_{i\neq j}B_i\setminus \bigcap_i B_i$.
In addition, let $a_j$ be the center of $B_j$.
Then, $\dist(a_i,b_i) \ge t+1$ for all $i$ and $\dist(a_i,b_j) \le t$ for all distinct $i,j$.
Hence, $m \le f(t;X)$, contradicting that $m > f(t;X)$.
This argument proves \cref{thm: general helly} with the help of the following result.
\begin{theorem}\label{thm: main result for f}
    $f(t;X)=2^{t+1}$ for every $t \ge 0$ and every set $X$ with $|X| \ge 2$.
\end{theorem}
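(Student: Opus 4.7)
The plan has two parts: an explicit construction for the lower bound $f(t;X) \ge 2^{t+1}$, and a dimension argument for the upper bound $f(t;X) \le 2^{t+1}$. For the lower bound, I fix two distinct elements $0, 1 \in X$, take $n = t+1$, and for each $v \in \{0,1\}^{t+1}$ set $a_v = v$ and $b_v = \mathbf{1} - v$. Then $\dist(a_v, b_v) = t+1$, and for $v \ne v'$ we have $\dist(a_v, b_{v'}) = (t+1) - \dist(v, v') \le t$, giving $2^{t+1}$ valid pairs.

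For the upper bound, given pairs $(a_i, b_i)_{i \in [m]}$ satisfying the hypotheses, I would pick for each $i$ a set $S_i \subseteq [n]$ of exactly $t+1$ coordinates on which $a_i$ and $b_i$ disagree, and define $P_i(x) = \prod_{k \in S_i} \mathbbm{1}[x(k) = b_i(k)]$ for $x \in X^n$. A short check shows $P_i(b_j) = \mathbbm{1}[i = j]$: the diagonal case is trivial, while $P_i(b_j) = 1$ for $j \ne i$ would force $b_j|_{S_i} = b_i|_{S_i}$ and hence $\dist(a_i, b_j) \ge t+1$, contradicting the hypothesis. So the matrix $(P_i(b_j))_{i,j}$ equals $I_m$, the $P_i$ are linearly independent as functions, and $m$ is bounded by the dimension of any ambient function space containing all of them.

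The main obstacle is that the na\"ive ambient space (multilinear polynomials of degree $\le t+1$, or cylinder-set indicators over $(t+1)$-subsets of coordinates) has dimension depending on both $n$ and $|X|$, and this only reproduces the F\"uredi-type bound $\binom{2t+2}{t+1}$ via the proposition preceding the theorem. The plan for the novel step is to factor the identity matrix $I_m = (P_i(b_j))_{i,j}$ through a bilinear pairing valued in a space of dimension $2^{t+1}$, uniformly in $n$ and $|X|$. A natural attempt exploits the $2^{t+1}$ ``mixed'' points $c_i^T$ for $T \subseteq S_i$, defined by $c_i^T(k) = b_i(k)$ if $k \in T$ and $c_i^T(k) = a_i(k)$ otherwise; these satisfy $\dist(a_i, c_i^T) = |T|$ and so lie in $B(a_i, t)$ precisely for $T \subsetneq S_i$. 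The subset lattice $2^{[t+1]}$, of size $2^{t+1}$, is the natural index set for the target pairing, and I would try to use the distance hypotheses between $(a_i, b_i)$ and $(a_j, b_j)$ to force a M\"obius-type identity on these lattices that compresses the $P_i$ data to the correct dimension. Pinning down this pairing, which must be purely combinatorial in the binary choice between $a_i(k)$ and $b_i(k)$ (so that it survives arbitrary $X$), is the crux of the proof and the step I would spend the most effort on.
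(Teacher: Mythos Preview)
Your lower bound is correct and matches the paper's construction. Your setup for the upper bound is also sound: the functions $P_i$ do satisfy $P_i(b_j)=\delta_{ij}$, and you correctly diagnose that the na\"ive ambient space only recovers the F\"uredi bound $\binom{2t+2}{t+1}$.

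The gap is that you have not actually produced the dimension-free step; you say so yourself. Your plan is to factor $I_m$ through a pairing on $\mathbb{R}^{2^{t+1}}$, but note that the \emph{existence} of such a factoring is equivalent to $m\le 2^{t+1}$, so you must construct it explicitly from the data. Your candidate building blocks, the mixed points $c_i^T$ indexed by $T\subseteq S_i$, are indexed by a lattice that depends on $i$ (since $S_i\subseteq[n]$ varies), and you give no mechanism to align these lattices across different $i$ into a common $2^{t+1}$-dimensional target. The M\"obius-type identity you allude to is not specified, and I do not see a natural candidate: the only structural information linking $(a_i,b_i)$ and $(a_j,b_j)$ is the single inequality $\dist(a_i,b_j)\le t$, which does not obviously induce any lattice morphism between $2^{S_i}$ and $2^{S_j}$.

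The paper's route is conceptually different from what you sketch. Rather than compressing to a small ambient space, it works in the full $2^n$-dimensional space of multilinear polynomials but assigns to each $i$ \emph{many} polynomials: for every pair $(I_1,I_2)$ with $I_1\subseteq D_i$ large (roughly $|I_1|\ge t+\lceil s_i/2\rceil$, where $D_i=\{k:a_{i,k}\neq b_{i,k}\}$ and $|D_i|=t+s_i$) and $I_2\subseteq[n]\setminus D_i$ arbitrary, one defines
\[
f_{i,I_1,I_2}(x)=\prod_{k\in I_1\cup I_2}(x_k-a_{i,k})\prod_{k\in D_i\setminus I_1}(x_k-b_{i,k}).
\]
The number of such polynomials for index $i$ is $V_{t+s_i,s_i}\cdot 2^{n-(t+s_i)}$. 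The heart of the proof is showing that the full collection $\{f_{i,I_1,I_2}\}$ over all $i$ and all compatible $(I_1,I_2)$ is linearly independent; this uses carefully chosen evaluation points $x_{i,I_1,I_2}$ and a total order on the sets $(D_i\setminus I_1)\cup I_2$ to get a triangular system. Linear independence in the $2^n$-dimensional multilinear space then yields $\sum_i V_{t+s_i,s_i}2^{n-(t+s_i)}\le 2^n$, and since $V_{t+s_i,s_i}\ge 2^{s_i-1}$ this gives $m\le 2^{t+1}$. So the $2^n$ on both sides cancels, and the dimension-freeness comes not from a small target space but from the fact that each $i$ contributes a $2^{-(t+1)}$-fraction of the full space. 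This packing idea is what your proposal is missing.
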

In the binary case, we further prove the following.
\begin{theorem} \label{thm: main result for f'}
    $f'(t;\{0,1\}) = 2^{t+1}$ for every $t \ge 0$.
\end{theorem}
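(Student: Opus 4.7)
The plan is probabilistic: I will exhibit $m$ pairwise disjoint events $E_1,\dots,E_m \subseteq \{-1,+1\}^n$, each of probability exactly $2^{-(t+1)}$ under the uniform measure, whence $m\cdot 2^{-(t+1)}\le 1$ and thus $m\le 2^{t+1}$. Write $\vec u_i:=b_i-a_i\in\{-1,0,+1\}^n$, so that $\|\vec u_i\|^2=\dist(a_i,b_i)\ge t+1$.

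The first step is a polarization identity specific to the binary setting. Using $\dist(x,y)=|x|+|y|-2\langle x,y\rangle$ for $x,y\in\{0,1\}^n$ and summing $(a_i(k)-b_i(k))(a_j(k)-b_j(k))$ over $k$ yields
\[
2\,\vec u_i\cdot \vec u_j \;=\; \dist(a_i,b_j)+\dist(a_j,b_i)-\dist(a_i,a_j)-\dist(b_i,b_j).
\]
A short per-coordinate accounting then expresses $\dist(a_i,a_j)+\dist(b_i,b_j)$ as $\|\vec u_i\|^2+\|\vec u_j\|^2-2A_{ij}+2E_{ij}$, where $A_{ij}$ counts coordinates on which $\vec u_i$ and $\vec u_j$ agree with the same nonzero sign and $E_{ij}\ge 0$ counts coordinates outside both supports where the common values disagree. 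Plugging this into the hypothesis $\dist(a_i,b_j)+\dist(a_j,b_i)\le 2t$ and writing $B_{ij}$ for the number of coordinates on which $\vec u_i,\vec u_j$ take \emph{opposite} nonzero signs gives the key \emph{conflict lemma}
\[
B_{ij}\;\ge\;\tfrac12\bigl(\|\vec u_i\|^2+\|\vec u_j\|^2\bigr)+E_{ij}-t\;\ge\;1.
\]

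For each $i$ pick a set $T_i\subseteq \mathrm{supp}(\vec u_i)$ of size exactly $t+1$ and let $E_i:=\{\sigma\in\{-1,+1\}^n : \sigma|_{T_i}=\vec u_i|_{T_i}\}$, so $\Pr[E_i]=2^{-(t+1)}$; the events $E_i,E_j$ are disjoint precisely when $T_i\cap T_j$ catches a conflict coordinate. In the canonical case $\|\vec u_i\|^2=t+1$ for every $i$, the subsets $T_i=\mathrm{supp}(\vec u_i)$ are forced and the conflict lemma instantly supplies the required disjointness. In the general case, the strengthened lower bound $B_{ij}\ge\tfrac12(\|\vec u_i\|^2+\|\vec u_j\|^2)-t$ provides surplus conflict coordinates growing linearly with the excess norms, and the plan is to use this surplus to make a simultaneous choice of all $T_i$'s (for instance by picking the $T_i$'s uniformly at random and combining a first-moment estimate with a greedy deletion) so that every pair's intersection still hits a conflict. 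The hardest point is exactly this last step: producing one global choice of $(t+1)$-subsets $T_i\subseteq\mathrm{supp}(\vec u_i)$ that works for all $\binom{m}{2}$ pairs at once; once this is established, the bound $m\le 2^{t+1}$ is immediate. It is precisely here that the binary structure $X=\{0,1\}$ is used: the polarization identity sharpens the ambient-space bound $\vec u_i\cdot\vec u_j\le t$ into the combinatorial conflict lemma that drives the disjointness.
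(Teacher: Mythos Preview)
Your conflict lemma is correct and your treatment of the canonical case $\dist(a_i,b_i)=t+1$ for all $i$ is clean: with $T_i=\mathrm{supp}(\vec u_i)$ forced, $B_{ij}\ge 1$ immediately gives disjointness of the events and the bound follows. The gap is precisely where you say it is, and the sketch you offer does not close it. ``First-moment plus greedy deletion'' cannot work here: you are not trying to find a large sub-collection with a property, you are trying to bound the size $m$ of the \emph{given} collection. You are not allowed to delete any indices $i$; you must produce a single choice of $(T_i)_{i=1}^m$ that works for every pair simultaneously. A random choice of each $T_i\in\binom{S_i}{t+1}$ gives no useful control on the probability that a fixed pair $(i,j)$ misses all conflict coordinates (this probability depends on the overlap structure of $S_i,S_j$ in a way your surplus bound $B_{ij}\ge\frac12(|S_i|+|S_j|)-t$ does not pin down), and there is no union bound available since you have no a~priori bound on $m$.

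The paper's proof is also probabilistic and starts from essentially the same per-coordinate accounting that underlies your conflict lemma, but it avoids the selection problem entirely by defining the events on the \emph{full} supports rather than on size-$(t+1)$ subsets. Writing $D_i=\mathrm{supp}(\vec u_i)$ and $s_i=|D_i|-t$, one samples $\alpha\in\{0,1\}^n$ uniformly and takes $\mathcal E_i$ to be the event that $\alpha$ agrees with $a_i$ on more than half of $D_i$ (with a deterministic tie-breaker at the largest coordinate when $s_i$ is even). These events have probability $V_{t+s_i,s_i}/2^{t+s_i}\ge 2^{-(t+1)}$, and a direct computation (which parallels your identity $\dist(a_i,b_j)+\dist(a_j,b_i)=|S_i|+|S_j|-2B_{ij}+2E_{ij}$) shows they are pairwise disjoint. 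The point is that allowing the event probabilities to \emph{exceed} $2^{-(t+1)}$ when $s_i>1$ is exactly what buys you disjointness without any global selection; your insistence on probability exactly $2^{-(t+1)}$ is what creates the unsolved selection problem.
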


Our proof of \cref{thm: main result for f,thm: main result for f'} works in the more general setting where we assume $\dist(a_i,b_i) \ge t+s$ (for some $s \ge 1$) instead of $\dist(a_i,b_i) \ge t+1$.
For simplicity, we denote $f(t,s;X)$ and $f'(t,s;X)$ as the corresponding families' largest size.
Precisely, our proof shows that $f(t,s;X)\le {2^{t+s}}/{V_{t+s,s}}$ and $f'(t,s;\{0,1\})\le {2^{t+s}}/{V_{t+s,s}}$, where 
\begin{equation} \label{eq: V}
    V_{n,d} := \left\{
        \begin{array}{lll}
            \sum_{i=0}^{(d-1)/2} \binom{n}{i} & & d \text{ is odd} \\
            \sum_{i=0}^{d/2-1}\binom{n}{i}+\binom{n-1}{d/2-1} & & d \text{ is even}
        \end{array}
    \right.
\end{equation}
We note that $V_{n,d}$ is the size of the Hamming ball in $\{0,1\}^n$ of radius $\frac{d-1}{2}$ if $d$ is odd and of the union of two Hamming balls in $\{0,1\}^n$ of radius $\frac{d}{2}-1$ whose centers are of Hamming distance 1 if $d$ is even.
Interestingly, $V_{n,d}$ is also known to be the maximum possible 
cardinality of a set of points of diameter at most $d-1$ in $\{0,1\}^n$;
see \cite{Katona64,Kleitman66,Bezrukov87}.
We also note that when $d$ is odd, $2^n/V_{n,d}$ is the well-known 
Hamming bound for the maximum possible number of codewords in a 
binary {\em error correcting code} (ECC) of length $n$ and 
distance $d$.
Binary ECCs, which are large collections of binary strings with a 
prescribed minimum Hamming distance between any pair,
are widely studied and applied in computing, telecommunication, 
information theory and more; see  \cite{MS77,MMSS77}.
Indeed, ECCs naturally define $a_i$s and $b_i$s in \cref{def: f and f'}.
As we will show in \cref{sec: binary string}, the existence of ECCs 
that match the Hamming bound (the so-called {\em perfect codes}) 
and their extensions imply that 
$f(t,s;X)=f'(t,s;\{0,1\})={2^{t+s}}/{V_{t+s,s}}$ when 
$s\in\{1,2\}$, or $s\in\{3,4\}$ and $t+4$ is a power of $2$, 
or $s\in\{7,8\}$ and $t=16$.
This will be shown using the well-known Hamming code and the Golay code.
In addition, the famous BCH codes discovered by
Bose, Chaudhuri and Hocquenghem 
imply that our bounds are close to being tight when $s$ is fixed, 
that is, $f(t,s;X)=\Theta_s(2^{t+s}/V_{t+s,s})$ 
and $f'(t,s;X)=\Theta_s(2^{t+s}/V_{t+s,s})$.

Another well-known result in extremal set theory due to Tuza~\cite{Tuza87} states that if $(A_i,B_i)_{i=1}^m$ satisfies $A_i\cap B_i=\emptyset$ for $i\in[m]$ and $(A_i\cap B_j)\cup(A_j\cap B_i)\neq\emptyset$ for distinct $i,j\in[m]$, then $\sum_{i=1}^m p^{|A_i|}(1-p)^{|B_i|} \le 1$ for all $0<p<1$.
This also has various applications; see \cite{Tuza94,Tuza96}.
When $|A_i|+|B_i|=t+1$ for all $i$, this result implies $m \le 2^{t+1}$, which is tight.
\cref{thm: main result for f'} generalizes this by taking $A_i:=\{k\in [n]: a_{i,k}=1\}$ and $B_i:=\{k\in[n]: b_{i,k}=1\}$: if $|A_i\triangle B_i| \ge t+1$ for all $i \in [m]$ and $|A_i\triangle B_j|+|A_j\triangle B_i| \le 2t$ for all distinct $i,j \in [m]$, then $m \le f'(n,t,\{0,1\})=2^{t+1}$. Here, 
we do not require $A_i$ and $B_i$ to be disjoint and 
write $A\triangle B:=(A\setminus B)\cup (B\setminus A)$ for the symmetric difference of $A$ and $B$.

Finally, we mention briefly that \cref{thm: main result for f} 
motivates the study of a natural variant of 
the {\em Prague dimension} (also called the {\em product dimension})
of graphs. Initiated by Ne\v{s}et\v{r}il, Pultr and R\"odl~\cite{NP77,NR78}, the Prague dimension of a graph is the minimum $d$ such that every vertex is uniquely mapped to $\mb{Z}^d$ and two vertices are connected by an edge if and only if the corresponding vectors differ in all coordinates, i.e, it is
the minimum possible number of proper vertex colorings of $G$ so that for every pair $u, v$ of non-adjacent vertices there is at least one coloring in which $u$ and $v$ have the same color.
This notion has been studied intensively, see, e.g., 
\cite{LNP80,Alon86,ER96,Furedi00,AA20,GPW23}.

The rest of this paper is organized as follows.
In \cref{sec: general string} we present the proof of the main result \cref{thm: main result for f}. 
\cref{sec: binary string} deals with binary strings and briefly discusses the behavior of $f'(n,t,X)$ for $|X|>2$. 
We also discuss the connection to error correcting codes and another set-pair inequality.
In \cref{sec: related questions} we investigate several variants and generalizations of the main results including a fractional Helly-type result, a Hadwiger-Debrunner-type result, a variant of the Prague dimension of graphs, and a generalization of $f(t;X)$ to sequences of sets.
The final \cref{sec: concluding} contains some concluding remarks and open problems.

\section{General strings} \label{sec: general string}
We start by describing the lower bound of \cref{t11}, given by~\cite{RST23}.
This also provides the lower bounds in \cref{thm: main result for f,thm: main result for f'} as $f'(t;X)\ge f(t;X) \ge h(n,t;X) \ge 2^{t+1}$
\begin{proposition} \label{prop: lower bound for helly}
    $h(n,t;X) \ge 2^{t+1}$ for any $n>t$ and any set $X$ of cardinality $|X| \ge 2$.
\end{proposition}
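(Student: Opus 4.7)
The plan is to exhibit an explicit extremal family of $2^{t+1}$ Hamming balls of radius $t$ in $X^n$ whose full intersection is empty but every proper sub-family has a common point. Fix two distinct elements $0,1 \in X$, and for each $v \in \{0,1\}^{t+1}$ define $a_v \in X^n$ by $(a_v)_i = v_i$ for $1 \le i \le t+1$ and $(a_v)_i = 0$ for $t+1 < i \le n$. The family to consider is $\mc F = \{B(a_v,t) : v \in \{0,1\}^{t+1}\}$, which has cardinality $2^{t+1}$. It then suffices to verify the two required properties.

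For the first, I would fix an arbitrary $v^* \in \{0,1\}^{t+1}$, remove $B(a_{v^*},t)$ from $\mc F$, and produce a witness $p \in X^n$ lying in every remaining ball. The natural candidate is $p_i = 1 - v^*_i$ for $1 \le i \le t+1$ and $p_i = 0$ otherwise. A direct count gives $\dist(p, a_v) = |\{i \le t+1 : v_i = v^*_i\}| = (t+1) - d_H(v, v^*)$, where $d_H$ denotes Hamming distance in $\{0,1\}^{t+1}$. Since $v \ne v^*$ forces $d_H(v,v^*) \ge 1$, this gives $\dist(p, a_v) \le t$, so $p \in B(a_v, t)$ for every $v \ne v^*$.

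For the second, assume toward contradiction that some $p \in X^n$ lies in every ball of $\mc F$. I would construct a specific $v^{**} \in \{0,1\}^{t+1}$ that violates $\dist(p, a_{v^{**}}) \le t$ as follows: for $1 \le i \le t+1$, set $v^{**}_i := 1 - p_i$ if $p_i \in \{0,1\}$, and otherwise set $v^{**}_i := 0$. By construction $(a_{v^{**}})_i \ne p_i$ for every coordinate $i \le t+1$: in the first case because we chose the opposite bit, and in the second case because $(a_{v^{**}})_i \in \{0,1\}$ while $p_i \notin \{0,1\}$. Hence $\dist(p, a_{v^{**}}) \ge t+1$, contradicting $p \in B(a_{v^{**}}, t)$.

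The only subtlety is handling $|X| > 2$, where $p$ may have coordinates outside $\{0,1\}$; this is absorbed into the definition of $v^{**}$ by letting it be arbitrary on such coordinates, since they already disagree with $p$. I do not anticipate any real technical obstacle: once the construction is written down, the verification is a direct Hamming-distance count.
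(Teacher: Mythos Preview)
Your proposal is correct and takes essentially the same approach as the paper: both exhibit the family of $2^{t+1}$ balls centered at the points of $\{0,1\}^{t+1}$ (padded with zeros in your version, after reducing to $n=t+1$ in the paper's), and both use the complement $\overline{v^*}$ as the witness that any $2^{t+1}-1$ of the balls intersect. Your treatment of the empty-intersection step is actually more careful than the paper's, which tacitly computes the intersection inside $\{0,1\}^{t+1}$ rather than $X^{t+1}$; your explicit construction of $v^{**}$ handles coordinates of $p$ lying outside $\{0,1\}$ and closes that small gap.
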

\begin{proof}
    We may assume $n=t+1$ as $h(n,t;X) \ge h(t+1,t;X)$ and that $0,1 \in X$.
    Consider all the $2^{t+1}$ Hamming balls $B(a,t)$ where $a \in \{0,1\}^{t+1}$.
    It suffices to show that any $2^{t+1}-1$ balls intersect while all of them do not.
    Observe that $B(a,t)=\{0,1\}^{t+1}\setminus \{\overline{a}\}$ where $\overline{a} \in \{0,1\}^{t+1}$ is given by flipping all coordinates of $a$, i.e. changing 0 to 1 and changing 1 to 0.
    Therefore, for any $\ell=2^{t+1}-1$ vectors $a_1,\dots,a_\ell \in \{0,1\}^{t+1}$, the intersection $\bigcap_{i=1}^\ell B(a_i,t)$ contains all but at most $\ell < 2^{t+1}$ elements in $\{0,1\}^{t+1}$. 
    In other words, any $2^{t+1}-1$ such Hamming balls intersect.
    On the other hand, $\bigcap_{a\in \{0,1\}^{t+1}} B(a,t)=\bigcap_{a\in \{0,1\}^{t+1}}(\{0,1\}^{t+1}\setminus \{\overline{a}\})=\{0,1\}^{t+1}\setminus \bigcup_{a\in \{0,1\}^{t+1}} \{\overline{a}\}=\emptyset$, i.e. all the $2^{t+1}$ balls do not intersect.
\end{proof}

The rest of this section contains the proof of the upper bound 
of \cref{thm: main result for f}, and thus also of \cref{t11}.
To this end, we need the following properties of $V_{n,d}$.
\begin{claim}\label{claim: volume properties}
    $V_{n,d}\ge 2V_{n-1,d-1}$ for $2 \le d\le n$ and this is an equality if $d$ is even.
    In particular, $V_{n,d}\ge 2^{d-1}$ for all $1 \le d \le n$.
\end{claim}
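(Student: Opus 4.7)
My plan is to verify the inequality directly from the definition of $V_{n,d}$ in \eqref{eq: V}, splitting into cases based on the parity of $d$, and then deduce the "in particular" part by induction on $d$.

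First I would handle the easier case when $d$ is even. Here $d-1$ is odd, so $V_{n-1,d-1}=\sum_{i=0}^{d/2-1}\binom{n-1}{i}$, while $V_{n,d}=\sum_{i=0}^{d/2-1}\binom{n}{i}+\binom{n-1}{d/2-1}$. Applying Pascal's identity $\binom{n}{i}=\binom{n-1}{i}+\binom{n-1}{i-1}$ to each term in the sum defining $V_{n,d}$ and reindexing the shifted sum, everything should telescope nicely and give exactly $V_{n,d}=2\sum_{i=0}^{d/2-1}\binom{n-1}{i}=2V_{n-1,d-1}$, establishing the claimed equality.

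Next I would do the odd case, which is slightly more delicate. When $d$ is odd, $d-1$ is even, so $V_{n-1,d-1}=\sum_{i=0}^{(d-3)/2}\binom{n-1}{i}+\binom{n-2}{(d-3)/2}$, and $V_{n,d}=\sum_{i=0}^{(d-1)/2}\binom{n}{i}$. Again expanding the binomials $\binom{n}{i}$ via Pascal's identity and collecting, I expect to obtain
\[
V_{n,d}-2V_{n-1,d-1}=\binom{n-2}{(d-1)/2}-\binom{n-2}{(d-3)/2}.
\]
This quantity is nonnegative because $(d-3)/2<(d-1)/2\le (n-2)/2$ (using $d\le n-1$), so the second binomial is farther from the middle row $(n-2)/2$ than the first; the boundary $d=n$ (which forces $n$ odd) gives equality by the symmetry $\binom{n-2}{(n-1)/2}=\binom{n-2}{(n-3)/2}$. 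This yields the desired inequality.

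Finally, the "in particular" bound follows by a clean induction on $d$. The base case $d=1$ gives $V_{n,1}=\binom{n}{0}=1=2^{0}$, and the inductive step is immediate from the first part: $V_{n,d}\ge 2V_{n-1,d-1}\ge 2\cdot 2^{d-2}=2^{d-1}$, where the induction hypothesis applies because $1\le d-1\le n-1$. I don't anticipate a real obstacle here; the only place requiring a moment of care is verifying that the sign of $\binom{n-2}{(d-1)/2}-\binom{n-2}{(d-3)/2}$ behaves correctly at the edge case $d=n$, but this is resolved by the binomial symmetry noted above.
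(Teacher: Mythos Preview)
Your proof is correct and follows essentially the same approach as the paper: split by parity of $d$, use Pascal's identity to expand, and reduce the odd case to a single binomial inequality. The only cosmetic difference is that the paper stops at $\binom{n-1}{k-1}\ge 2\binom{n-2}{k-2}$ (justified via $k\le (n+1)/2$), whereas you apply Pascal once more to rewrite this as $\binom{n-2}{(d-1)/2}\ge\binom{n-2}{(d-3)/2}$; these are equivalent.
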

\begin{proof}
    If $d=2k$ for some $k \in \{1,2,\dots,\floor{\frac{n}{2}}\}$, then 
    \begin{equation} \nonumber
        V_{n,d}
        = \sum_{i=0}^{k-1}\binom{n}{i}+\binom{n-1}{k-1}
        = \sum_{i=0}^{k-1}\binom{n-1}{i}+\sum_{i=0}^{k-2}\binom{n-1}{i}+\binom{n-1}{k-1}
        = 2 \sum_{i=0}^{k-1}\binom{n-1}{i}
        = 2V_{n-1,d-1}.
    \end{equation}
    If $d=2k-1$ for some $k \in \{2,3,\dots,\floor{\frac{n+1}{2}}\}$, then 
    \begin{equation} \nonumber
        V_{n,d}
        = \sum_{i=0}^{k-1}\binom{n}{i}
        = 2 \sum_{i=0}^{k-2}\binom{n-1}{i} + \binom{n-1}{k-1}
        \ge 2 \sum_{i=0}^{k-2}\binom{n-1}{i} + 2\binom{n-2}{k-2}
        = 2V_{n-1,d-1}.
    \end{equation}
    Here, we used $\binom{n-1}{k-1}\ge 2\binom{n-2}{k-2}$ as $k \le \frac{n+1}{2}$.

    Given the first inequality, $V_{n,d}\ge 2V_{n-1,d-1}\ge\dots\ge 2^{d-1}V_{n-d+1,1}=2^{d-1}$, as desired.
\end{proof}

We now show the upper bound of \cref{thm: main result for f} by the following stronger theorem.
\begin{theorem} \label{thm: general theorem for f}
    Let $n > t \ge 0, m \ge 1$, and $X$ be nonempty.
    Suppose $a_1,a_2,\dots,a_m,b_1,b_2,\dots,b_m \in X^n$, and assume that
	for each $i\in[m]$, $\dist(a_i,b_i)=t+s_i$ for some $s_i \ge 1$, 
	and $\dist(a_i,b_j) \le t$ for all distinct $i,j \in [m]$.
    Then, 
    \begin{equation} \label{eq: main eq for general X}
        \sum_{i=1}^m \frac{V_{t+s_i,s_i}}{2^{t+s_i}} \le 1.
    \end{equation}
    In particular, $f(t;X) \le 2^{t+1}$ and $f(t,s;X) \le 2^{t+s}/V_{t+s,s}$ if $s_i \ge s$ for all $i \in [m]$.
\end{theorem}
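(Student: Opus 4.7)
My plan is to prove the weighted inequality $\sum_i V_{t+s_i,s_i}/2^{t+s_i}\le 1$ via a packing argument, which I can execute cleanly in the binary case and then would extend algebraically to general $X$.

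\emph{Binary case.} For each $i$, let $S_i\subseteq[n]$ be the $(t+s_i)$-set of coordinates where $a_i$ and $b_i$ differ. The hypothesis $\dist(a_i,b_j)\le t$ forces $a_i$ and $b_j$ to agree on at least $s_i$ positions of $S_i$, and since $a_i|_{S_i}$ and $b_i|_{S_i}$ disagree throughout $S_i$, this gives $\dist(b_i|_{S_i},b_j|_{S_i})\ge s_i$ for all $j\ne i$. In the binary setting $b_i|_{S_i}=\overline{a_i|_{S_i}}$, and I would expand $\dist(a_i,b_j)+\dist(a_j,b_i)\le 2t$ by splitting $[n]$ into the four pieces $S_i\cap S_j$, $S_i\setminus S_j$, $S_j\setminus S_i$, $[n]\setminus(S_i\cup S_j)$, obtaining the sharper inequality
\[
\dist\!\left(b_i|_{S_i\cap S_j},\,b_j|_{S_i\cap S_j}\right)\;\ge\;\tfrac{s_i+s_j}{2}.
\]
Now define the witness set $W_i=\{z\in\{0,1\}^n:z|_{S_i}\in B(b_i|_{S_i},\lfloor(s_i-1)/2\rfloor)\}$, with the natural two-ball modification for even $s_i$, so that $|W_i|/2^n=V_{t+s_i,s_i}/2^{t+s_i}$. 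If some $z$ lies in $W_i\cap W_j$ with $i\ne j$, restricting to $S_i\cap S_j$ and applying the triangle inequality would force $(s_i+s_j)/2\le r_i+r_j\le(s_i+s_j)/2-1$, a contradiction (with the parity of $s_i+s_j$ handled by a ceiling when needed). Hence the $W_i$ are pairwise disjoint in $\{0,1\}^n$, and sampling $z$ uniformly from $\{0,1\}^n$ gives the desired inequality in one line.

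\emph{General $X$.} For arbitrary $X$ the naive probabilistic argument breaks, since no canonical measure on $X^n$ assigns weight $V_{t+s_i,s_i}/2^{t+s_i}$ to $W_i$: the factor $2^{t+s_i}$ in the denominator is intrinsically binary rather than $|X|$-ary. The ``novel variant of the dimension argument'' mentioned in the introduction must convert the disjointness statement into a linear-algebraic identity that depends only on the per-coordinate binary distinction ``$x_k=a_i^k$ vs.\ $x_k\ne a_i^k$''. A plausible implementation is to consider, for each $i$, the binary subcube $Y_i=\{x\in X^n:x_k\in\{a_i^k,b_i^k\}\text{ for all }k\}$, which has size exactly $2^{t+s_i}$ and is naturally identified with $\{0,1\}^{S_i}$; define an indicator $f_i:X^n\to\{0,1\}$ supported on a $V_{t+s_i,s_i}$-sized subset of $Y_i$; verify $f_i(b_j)=\delta_{ij}$ using the distance lemma above; and finally apply an appropriate inner product / orthogonality identity on the span of the $f_i$'s to translate this pointwise diagonal structure into the weighted sum bound, with the normalization $2^{t+s_i}=|Y_i|$ arising naturally rather than $|X|^n$.

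\textbf{Main obstacle.} The primary difficulty is precisely the general-$X$ step: constructing a dimension/polynomial argument that is simultaneously sharp (producing exactly the coefficients $V_{t+s_i,s_i}/2^{t+s_i}$ rather than crude upper bounds) and independent of both the ambient dimension $n$ and the alphabet size $|X|$. The binary case shows what the correct quantity to bound must be, but the genuinely new content of the paper is the algebraic machinery that delivers this uniformly over all~$X$.
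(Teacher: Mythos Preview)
Your binary argument is essentially correct and coincides with the paper's separate proof for the binary function $f'$: your witness sets $W_i$ are its events $\mathcal E_i$, your intermediate inequality $\dist(b_i|_{S_i\cap S_j},b_j|_{S_i\cap S_j})\ge(s_i+s_j)/2$ is valid, and only the even-$s_i$ tiebreak (more delicate than ``a ceiling'' --- the paper fixes $d_i=\max S_i$ and extracts a contradiction when all inequalities collapse to equalities) is glossed over.

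The genuine gap is the general-$X$ case, which is the actual content of this theorem and which you explicitly leave unproved. Your speculation does not work as stated: even if ``$f_i(b_j)=\delta_{ij}$'' made sense (it need not, since $b_j$ typically lies outside $Y_i$), linear independence of only $m$ functions yields $m\le(\text{dimension})$, not the weighted inequality. The paper's construction is different. Embed $X\subseteq\mathbb R$; for each $i$ and each pair $(I_1,I_2)$ with $I_1\subseteq D_i$ of size at least $t+\lceil s_i/2\rceil$ (with a $d_i$-tweak when $s_i$ is even) and $I_2\subseteq[n]\setminus D_i$ \emph{arbitrary}, define the multilinear polynomial
\[
f_{i,I_1,I_2}(x)=\prod_{k\in I_1\cup I_2}(x_k-a_{i,k})\prod_{k\in D_i\setminus I_1}(x_k-b_{i,k}).
\]
There are $V_{t+s_i,s_i}\cdot 2^{\,n-(t+s_i)}$ such polynomials for each $i$. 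The paper shows they are \emph{all} linearly independent, across all $i$, by evaluating at tailored points $x_{i,I_1,I_2}$ and arguing along a total order on the sets $(D_i\setminus I_1)\cup I_2$; the hypothesis $\dist(b_i,a_j)\le t$ enters as the bound $|(J_1\cup J_2)\setminus((D_i\setminus I_1)\cup I_2)|\le t$. Comparing against the $2^n$-dimensional space of multilinear polynomials on $\mathbb R^n$ yields $\sum_i V_{t+s_i,s_i}\,2^{\,n-(t+s_i)}\le 2^n$. The ingredient you are missing is precisely the free parameter $I_2$: padding each family by the factor $2^{n-(t+s_i)}$ makes the comparison against $2^n$, which is what renders the bound independent of both $|X|$ and $n$.
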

\begin{proof}
    First, suppose we have proved \cref{eq: main eq for general X}.
    Then, \cref{claim: volume properties} implies $1 \ge m\cdot {2^{s_i-1}}/{2^{t+s_i}}\ge m/2^{t+1}$, i.e. $m \le 2^{t+1}$.
    Hence, $f(t;X) \le 2^{t+1}$.
    Similarly, if $s_i \ge s$ for all $i \in [m]$, using \cref{claim: volume properties}, we acquire $1 \ge \sum_{i=1}^m \frac{V_{t+s_i,s_i}}{2^{t+s_i}}\ge m\cdot V_{t+s,s}/2^{t+s}$, i.e. $m \le 2^{t+s}/V_{t+s,s}$.
    So, $f(t,s;X) \le 2^{t+s}/V_{t+s,s}$.
    
\newcommand{\fii}{{f_{\scriptscriptstyle i,I_1,I_2}}}
\newcommand{\fjj}{{f_{\scriptscriptstyle j,J_1,J_2}}}
\newcommand{\fij}{{f_{\scriptscriptstyle i,J_1,J_2}}}
\newcommand{\xii}{{x_{\scriptscriptstyle i,I_1,I_2}}}
\newcommand{\cii}{{c_{\scriptscriptstyle i,I_1,I_2}}}
\newcommand{\cjj}{{c_{\scriptscriptstyle j,J_1,J_2}}}
    In the rest of the proof, we establish 
	\cref{eq: main eq for general X}.
    The proof is algebraic and uses a novel variant of the dimension argument which provides a dimension-free upper bound.
    Without loss of generality, assume $X \subseteq \mb{R}$.
    For each \( i \in [m] \), denote \( D_i:=\{k\in[n]: a_{i,k}\not = b_{i,k}\} \) and  $d_i$ to be the largest element in $D_i$.
    Then, \( |D_i|=\dist(a_i,b_i)=t+s_i \).
    In addition, we call a pair $(I_1,I_2)$ {\em compatible with $i$} if $I_1 \subseteq D_i, |I_1| \ge t+\frac{s_i+1}{2}, I_2 \subseteq [n]\setminus D_i$, or $I_1 \subseteq D_i\setminus\{d_i\}, |I_1|=t+\frac{s_i}{2}, I_2 \subseteq [n]\setminus D_i$
    (the latter happens only when $s_i$ is even).
    Note that $|I_1|\ge t+\frac{s_i}{2}$ in both cases and $|I_1|=t+\frac{s_i}{2}$ only if $I_1\subseteq D_i\setminus\{d_i\}$.
    For every $i \in [m]$ and every such pair $(I_1,I_2)$, define a polynomial on $x \in \mb{R}^n$ by 
    \begin{equation} \nonumber
        \fii(x) := \prod_{k \in I_1\cup I_2} (x_k-a_{i,k}) \prod_{k \in D_i\setminus I_1} (x_k-b_{i,k}).
    \end{equation}
    Recall \cref{eq: V}.
    The number of pairs compatible with $i$ is $V_{t+s_i,s_i}2^{n-(t+s_i)}$.
    Thus, it suffices to show that all such $\fii$s are linearly independent.
    Indeed, since every $\fii$ is a multilinear polynomial on $n$ variables, the linear independence implies $\sum_{i=1}^m  {V_{t+s_i,s_i}}{2^{n-(t+s_i)}}\le 2^n$, giving \cref{eq: main eq for general X}.

    To show the linear independence, we define, for each $i \in [m]$ and each $(I_1,I_2)$ compatible with $i$, an $x=\xii \in \mb{R}^n$ by $x_k=a_{i,k}$ for all $k \in D_i\setminus I_1$; $x_k=b_{i,k}$ for all $k \in I_1\cup ([n]\setminus (D_i\cup I_2))$; $x_k \in X \setminus \{b_{i,k}\}$ 
	arbitrary for all $k \in I_2$. 
    We also need the following ordering of the subsets of $[n]$: for distinct subsets $E,F \subseteq [n]$, we denote $E \prec F$ if $|E| < |F|$ or $|E|=|F|$ and $\max(E\setminus F)>\max(F\setminus E)$.
    We also write $E\preceq F$ if $E\prec F$ of $E=F$.
    It is easy to check that $\preceq$ induces a total order of all the subsets of $[n]$.    
    Now, we state the crucial claim for the evaluations of $\fii$s (on $\xii$s).
\newcommand{\pii}{{(D_i\setminus I_1)\cup I_2}}
\newcommand{\pjj}{{(D_j\setminus J_1)\cup J_2}}
    \begin{claim} \label{claim: evaluations}
        Let $i,j \in [m]$ and $(I_1,I_2)$ be compatible with $i$ and $(J_1,J_2)$ be compatible with $j$.
        Then, 
        \begin{enumerate}
            \item[(i)] for $i=j$, we have $\fjj(\xii) \neq 0$ if and only if $I_1=J_1$ and $J_2\subseteq I_2$;
            \item[(ii)] for $i \neq j$, we have $\fjj(\xii) \neq 0$ implies $\pjj \prec \pii$. 
        \end{enumerate}
    \end{claim}
    \begin{proof}
        Write $x=\xii$ for simplicity.        
        First, when $i=j$, since $a_{i,k}=b_{i,k}$ for all $k \in J_2\subseteq [n]\setminus D_i$, 
        $$
            \fjj(\xii) = \fij(x)
            =\prod_{k \in J_1\cup J_2} (x_k-a_{i,k}) \prod_{k \in D_i\setminus J_1} (x_k-b_{i,k})
            =\prod_{k \in J_1} (x_k-a_{i,k})\!\!\!\! \prod_{k \in (D_i\setminus J_1)\cup J_2} (x_k-b_{i,k}).
        $$ 
        This means that $\fjj(x)\neq 0$ if and only if $x_k \neq a_{i,k}$ for all $k \in J_1$ and $x_k \neq b_{i,k}$ for all $k \in (D_i\setminus J_1)\cup J_2$.
        By the definition of $x=x_{i,I_1,I_2}$, we know that $x_k=a_{i,k}$ for all $k \in D_i\setminus I_1$ and $x_k=b_{i,k}$ for $k \in I_1\cup ([n]\setminus (D_i\cup I_2))$.
        So, if $f_{j,J_1,J_2}(x) \neq 0$, then $(D_i\setminus I_1)\cap J_1=\emptyset$, $I_1\cap (D_i\setminus J_1)=\emptyset$ and $([n]\setminus (D_i\cup I_2)) \cap J_2=\emptyset$, i.e. $I_1=J_1$ and $J_2\subseteq I_2$.
        On the other hand, if $I_1=J_1$ and $J_2\subseteq I_2$, using that $x_k \neq b_{i,k}$ for all $k \in I_2$, it is easy to see that $\fjj(x) \neq 0$.
        This demonstrates (i).

        For (ii), suppose $\fjj(x)\neq 0$. The goal is to show $\pjj \prec \pii$.
        The fact that $$\fjj(x)=\prod_{k \in J_1\cup J_2} (x_k-a_{j,k}) \prod_{k \in D_j\setminus J_1} (x_k-b_{j,k}) \neq 0$$
        implies $x_k \neq a_{j,k}$ for all $k \in J_1\cup J_2$.
        In particular, since $x_k=b_{i,k}$ for all $k \in I:=I_1\cup ([n]\setminus (D_i\cup I_2))$ (by the definition of $x=x_{i,I_1,I_2}$), it holds that  $b_{i,k}\neq a_{j,k}$ for all $k \in (J_1\cup J_2)\cap I$, meaning $\dist(b_i,a_j)\ge |(J_1\cup J_2)\cap I|$.
        Then, the assumption $\dist(b_i,a_j) \le t$ implies $|(J_1\cup J_2)\cap I| \le t$.
        Observe that $[n]\setminus I=\pii$, and thus 
        \begin{equation} \label{eq: raw}
            |J_1|+|J_2| = |J_1\cup J_2|
            = |(J_1\cup J_2)\cap I|+|(J_1\cup J_2)\cap ([n]\setminus I)|
            \le t + |[n]\setminus I|
            = t + \abs{\pii}.
        \end{equation}
        Namely, $|J_2| \le t +\abs{\pii} - |J_1|$.
        Then, using $|D_j|=t+s_j$ and $|J_1| \ge t+s_j/2$, we obtain
        \begin{equation}  \label{eq: order by size}
            \abs{\pjj}
            =  |D_j|-|J_1|+|J_2|
            \le  (t+s_j)-2|J_1| + t + \abs{\pii}
            \le \abs{\pii}.
        \end{equation}
        If $\abs{\pjj} < \abs{\pii}$, then $\pjj \prec \pii$, and we are done.
        
        From now on, let us assume that $\abs{\pjj} = \abs{\pii}$.
        For simplicity, write $E:=\pjj$ and $F:=\pii$.
        As $|E|=|F|$, our goal is to show that $E\neq F$ and $\max(E\setminus F)>\max(F\setminus E)$.
        Note that the derivation of \cref{eq: order by size} demonstrates that $|E|=|F|$ only if \cref{eq: raw} is an equality and that $|J_1|=t+s_j/2$.
        In particular, the former implies that $|(J_1\cup J_2)\cap ([n]\setminus I)|=|[n]\setminus I|$, i.e. $F=\pii = [n] \setminus I \subseteq J_1 \cup J_2$; the latter implies $J_1 \subseteq D_j\setminus \{d_j\}$.
        Hence, $d_j \notin F$ while $d_j \in \pjj=E$, showing $E\neq F$.
        In addition, $\max(E \setminus F) \ge d_j$ as $d_j \in E \setminus F$.
        Suppose for contradiction that $\max(F \setminus E)>\max(E \setminus F)$ ($\ge d_j$).
        However, since $F \subseteq J_1 \cup J_2 \subseteq (D_j\setminus \{d_j\})\cup J_2$ and $d_j=\max(D_j)$, we have $\max(F\setminus E) \in J_2\subseteq E$.
        This is impossible, so $\max(E\setminus F)>\max(F\setminus E)$ must hold.
        This shows $\pjj=E\prec F=\pii$, as desired.
    \end{proof}
    
    We now complete the proof by showing that all the $\fii$s constructed for $i\in[m]$ and $(I_1,I_2)$ compatible with $i$ are linearly independent.
    Suppose that $F:=\sum_{\scriptscriptstyle (j,J_1,J_2)} \cjj \fjj$ is the zero polynomial, where $\cjj \in \mb{R}$ for $j \in [m]$ and $(J_1,J_2)$ compatible with $j$.
    It suffices to prove $\cjj=0$ for all $(j,J_1,J_2)$.
    If not, we pick a triple $(i,I_1,I_2)$ with $\cii\neq 0$; if there are multiple such $(i,I_1,I_2)$s, pick the one that minimizes $\pii$ in the total order $\preceq$; if there is still a tie, then pick any of them.
    Consider $x:=\xii$, and suppose that $\cjj\fjj(x)\neq 0$ for some $(j,J_1,J_2)$.
    If $i=j$, then \cref{claim: evaluations}(i) implies $J_1=I_1$ and $J_2 \subseteq I_2$.
    Due to the minimality of $\pii$, it must be that $J_2=I_2$, meaning $(i,I_1,I_2)=(j,J_1,J_2)$.
    If $i \neq j$, \cref{claim: evaluations}(ii) implies that $\pjj \prec \pii$.
    Again, the minimality of $\pii$ implies $\cjj=0$. But this is impossible as we assumed $\cjj\fjj(x)\neq 0$.
    Altogether, $\cjj\fjj(x) \neq 0$ implies that $(i,I_1,I_2)=(j,J_1,J_2)$.
    In addition, \cref{claim: evaluations}(i) asserts $\fii(x) \neq 0$.
    So, $0=F(x)=\sum_{\scriptscriptstyle (j,J_1,J_2)} \cjj \fjj(x)=\cii\fii(x)$, and thus $\cii=0$.
    This contradicts our assumption that $\cii \neq 0$.
    Therefore, $\cii=0$ for all $(i,I_1,I_2)$, and this shows that all the $\fii$s are linearly independent.
\end{proof}


\section{Binary strings} \label{sec: binary string}
This section deals with the binary setting, e.g. $X=\{0,1\}$.
In this case, we can prove a stronger result (\cref{thm: main result for f'}) where we only assume that $\dist(a_i,b_j)+\dist(a_j,b_i)\le 2t$ for $i\neq j$.
The lower bound is, again, derived from \cref{prop: lower bound for helly}.
For the upper bound, we provide a probabilistic proof that is simpler than that of \cref{thm: general theorem for f}.
\begin{theorem} \label{thm: general theorem for f'}
    Let $n > t \ge 0$.
    Suppose $a_1,a_2,\dots,a_m,b_1,b_2,\dots,b_m \in \{0,1\}^n$ satisfy for all $i \in [m]$, $\dist(a_i,b_i) = t+s_i$ for some $s_i \ge 1$,  and $\dist(a_i,b_j)+\dist(a_j,b_i) \le 2t$ for all distinct $i,j\in[m]$.
    Then, 
    \begin{equation} \label{eq: main eq for binary case}
        \sum_{i=1}^m \frac{V_{t+s_i,s_i}}{2^{t+s_i}} \le 1.
    \end{equation}
    In particular, $f'(t;\{0,1\}) \le 2^{t+1}$ and $f'(t,s;X) \le 2^{t+s}/V_{t+s,s}$ if $s_i \ge s$ for all $i \in [m]$.
\end{theorem}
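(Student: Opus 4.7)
The plan is to give a probabilistic proof that is conceptually simpler than the algebraic argument used in \cref{thm: general theorem for f}. I will define, for each $i$, an event $E_i$ depending on a uniform random $x \in \{0,1\}^n$ with $\Pr[E_i] = V_{t+s_i, s_i}/2^{t+s_i}$, and then show that the events $E_1,\dots,E_m$ are pairwise disjoint. Summing the probabilities will immediately yield \cref{eq: main eq for binary case}, and the two bounds then follow from \cref{claim: volume properties} exactly as in the proof of \cref{thm: general theorem for f}.

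\emph{The events.} Write $D_i := \{k: a_{i,k}\neq b_{i,k}\}$, $d_i := \max D_i$, and $Z_i := \dist(x|_{D_i}, b_i|_{D_i})$, so that $\dist(x,a_i)-\dist(x,b_i) = (t+s_i) - 2Z_i$. Declare $E_i$ to hold if either $Z_i \le \floor{(s_i-1)/2}$, or $s_i$ is even, $Z_i = s_i/2$, and $x_{d_i}=a_{i,d_i}$. Since $x|_{D_i}$ is uniform on $\{0,1\}^{|D_i|}$, counting the good subsets $S:=\{k\in D_i:x_k=a_{i,k}\}$ matches the two clauses in \cref{eq: V}: the first contributes $\sum_{j=0}^{\floor{(s_i-1)/2}}\binom{t+s_i}{j}$, and for even $s_i$ the tie-break contributes $\binom{t+s_i-1}{s_i/2-1}$ (fix $d_i\in S$, then choose the remaining $s_i/2-1$ elements from $D_i\setminus\{d_i\}$), giving $\Pr[E_i]=V_{t+s_i,s_i}/2^{t+s_i}$.

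\emph{Disjointness.} Set $q_i := \dist(x,a_i)-\dist(x,b_i)$, so $E_i$ forces $q_i \ge t$, with equality only in the tie-break mode (otherwise $q_i \ge t+1$). Assuming $E_i$ and $E_j$ both hold, applying the Hamming triangle inequality twice gives
\[
\dist(a_i,b_j)+\dist(a_j,b_i) \ge [\dist(x,a_i)-\dist(x,b_j)]+[\dist(x,a_j)-\dist(x,b_i)] = q_i+q_j \ge 2t.
\]
If at least one of $E_i, E_j$ is non-tie-break then $q_i+q_j\ge 2t+1$, contradicting $\dist(a_i,b_j)+\dist(a_j,b_i)\le 2t$. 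Otherwise both events are tie-break and each of the triangle inequalities above must be tight; a standard equality analysis in Hamming space translates this into the coordinate conditions (*) $a_{i,k}=b_{j,k}$ at every $k$ with $x_k\neq b_{j,k}$, and (**) $a_{j,k}=b_{i,k}$ at every $k$ with $x_k\neq b_{i,k}$.

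The main obstacle is this residual tie-break case, which I resolve by exploiting $d_i=\max D_i$. At $d_i$ we have $x_{d_i}=a_{i,d_i}\neq b_{i,d_i}$, so (**) yields $a_{j,d_i}=b_{i,d_i}$. If $d_i=d_j$, combining with $x_{d_j}=a_{j,d_j}$ gives $a_{i,d_i}=x_{d_i}=a_{j,d_i}=b_{i,d_i}$, which is impossible since $d_i\in D_i$. If $d_i\neq d_j$, WLOG $d_i>d_j$; then $d_i\notin D_j$ forces $a_{j,d_i}=b_{j,d_i}$, whence $b_{j,d_i}=b_{i,d_i}\neq x_{d_i}$. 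Thus $x$ differs from $b_j$ at $d_i$, and (*) now gives $a_{i,d_i}=b_{j,d_i}=b_{i,d_i}$, again contradicting $d_i\in D_i$. Hence the events $E_i$ are pairwise disjoint and $\sum_i\Pr[E_i]\le 1$, completing the proof.
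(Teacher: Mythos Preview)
Your proof is correct, and the overall probabilistic framework is the same as the paper's: the events $E_i$ coincide with the paper's events $\mathcal{E}_i$ under the bijection $x\leftrightarrow\bar{x}$ (so $Z_i$ becomes $|D_i|-|D_i(\alpha)|$), and the probability computation is identical.

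Where you genuinely diverge is in the disjointness argument. The paper proves $\mathcal{E}_i\cap\mathcal{E}_j=\emptyset$ by an explicit coordinate bookkeeping: it splits $[n]$ into $D_i\setminus D_j$, $D_j\setminus D_i$, $D_i\cap D_j$ and the rest, computes the contribution of each block to $\dist(a_i,b_j)+\dist(a_j,b_i)$, and then squeezes the inequalities until the tie-break condition at $d_i=d_j$ yields a contradiction. You instead observe that $E_i$ forces $\dist(x,a_i)-\dist(x,b_i)\ge t$, sum two Hamming triangle inequalities to get $\dist(a_i,b_j)+\dist(a_j,b_i)\ge q_i+q_j\ge 2t$, and then read off the betweenness conditions (*), (**) from the equality case; the final contradiction at $d_i$ is then a two-line coordinate chase. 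Your route is noticeably shorter and more conceptual, and it makes transparent \emph{why} the tie-breaking at the maximum coordinate $d_i$ is exactly what is needed. The paper's argument, by contrast, is more self-contained (it does not invoke the equality case of the triangle inequality) and shows more explicitly how each block of coordinates contributes, which may be helpful if one wants to adapt the proof to non-binary alphabets.
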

\begin{proof}
    Given \cref{eq: main eq for binary case}, the derivation of the bounds for $f'(t;\{0,1\})$ and $f'(t,s;\{0,1\})$ is the same as that in the proof of \cref{thm: general theorem for f}, so we omit it here.
    
    We have to prove \cref{eq: main eq for binary case}.
    For each $i$, denote $D_i:=\{k\in[n]: a_{i,k}\neq b_{i,k}\}$ and $d_i := \max(D_i)$.
    Then, $\abs{D_i}=\dist(a_i,b_i)=t+s_i$.
    Now, sample a string $\alpha$, uniformly in $\{0,1\}^n$.
    For each $i\in [m]$, let $D_i(\alpha):=\{k\in D_i: \alpha_k=a_{i,k}\}$.
    Denote $\mc{E}_i$ to be the event that either $|D_i(\alpha)| \ge t + \frac{s_i+1}{2}$, or $|D_i(\alpha)|=t+\frac{s_i}{2}$ and $d_i \notin D_i(\alpha)$ (the latter happens only when $s_i$ is even).
    In both cases, $|D_i(\alpha)| \ge t + \frac{s_i}{2}$.
    It suffices to establish \cref{claim: events are disjoint} below because then, $1 \ge \Pr[\mc{E}_1\cup\dots\cup\mc{E}_m]=\sum_{i=1}^m \Pr[\mc{E}_i]= \sum_{i=1}^n\frac{V_{t+s_i,s_i}}{2^{t+s_i}}$, using \cref{eq: V}.
    \begin{claim} \label{claim: events are disjoint}
	    The events
        $\mc{E}_1,\mc{E}_2,\dots,\mc{E}_m$ are pairwise disjoint.
    \end{claim}
    \begin{proof}
        Suppose for contradiction that $\mc{E}_i$ and $\mc{E}_j$ are not disjoint for some $1 \le i < j \le m$.
        Let $\alpha \in \mc{E}_i\cap \mc{E}_j$.
        For convenience, denote $D_{ij}:=D_i \setminus D_j$ and $D_{ji}:=D_j\setminus D_i$.
        By the definition of $D_i$, we know that $a_i$ and $b_i$ are the same restricted to $D_i^c$ while they are the opposite restricted to $D_i$, i.e. $\restrict{a_i}{D_i^c}=\restrict{b_i}{D_i^c}$ and $\restrict{a_i}{D_i}={\restrict{\overline{b_i}}{D_i}}$ (we use $\overline{\,\,\cdot\,\,}$ for the opposite string of the same index set).
        Similarly, $\restrict{a_j}{D_j^c}=\restrict{b_j}{D_j^c}$ and $\restrict{a_j}{D_j}={\restrict{\overline{b_j}}{D_j}}$.
        As a consequence, coordinates among $D_{ij}\cup D_{ji}$ will contribute to the distance $\dist(a_i,b_j)+\dist(b_i,a_j)$.
        More precisely, 
        \begin{equation} \label{eq: Di minus Dj}
                \dist(\restrict{a_i}{D_{ij}},\restrict{b_j}{D_{ij}})+\dist(\restrict{b_i}{D_{ij}},\restrict{a_j}{D_{ij}}) 
                =\dist({\restrict{\overline{b_i}}{D_{ij}}},\restrict{b_j}{D_{ij}})+\dist(\restrict{b_i}{D_{ij}},\restrict{b_j}{D_{ij}}) = |D_{ij}|,
        \end{equation}
        and similarly, 
        \begin{equation} \label{eq: Dj minus Di}
            \dist(\restrict{a_i}{D_{ji}},\restrict{b_j}{D_{ji}}) + \dist(\restrict{b_i}{D_{ji}},\restrict{a_j}{D_{ji}})
            = |D_{ji}|.
        \end{equation}
        Write $g:=|D_i\cap D_j|$, and hence $|D_{ij}|=t+s_i-g,|D_{ji}|=t+s_j-g$.
        Then, \cref{eq: Di minus Dj,eq: Dj minus Di} imply
        \begin{equation} \label{eq: Di difference Dj}
            \begin{aligned}
                &\,\, \dist(a_i,b_j)+\dist(b_i,a_j) \\
                \ge&\,\, \dist(\restrict{a_i}{D_{ij}\cup D_{ji}},\restrict{b_j}{D_{ij}\cup D_{ji}})+\dist(\restrict{b_i}{D_{ij}\cup D_{ji}},\restrict{a_j}{D_{ij}\cup D_{ji}}) \\
                =&\,\, \dist(\restrict{a_i}{D_{ij}},\restrict{b_j}{D_{ij}})+\dist(\restrict{b_i}{D_{ij}},\restrict{a_j}{D_{ij}}) 
                    +\dist(\restrict{a_i}{D_{ji}},\restrict{b_j}{D_{ji}})+\dist(\restrict{b_i}{D_{ji}},\restrict{a_j}{D_{ji}}) \\
                =&\,\, |D_{ij}|+|D_{ji}|
                = 2t-2g + s_i+s_j.
            \end{aligned}
        \end{equation}
        By assumption, $2t \ge 2t-2g+s_i+s_j$, i.e. $2g \ge s_i + s_j$.

        The second step is to consider the contribution to $\dist(a_i,b_j)+\dist(b_i,a_j)$ from $k \in D_i\cap D_j$.
        Denote $D:=D_i(\alpha)\cap D_j(\alpha) \subseteq D_i \cap D_j$.
        Observe that $a_{i,k}=a_{j,k}=\alpha_k \neq b_{i,k}=b_{j,k}$ for $k \in D$. So
        \begin{equation} \label{eq: D}
            \dist(\restrict{a_i}{D},\restrict{b_j}{D})+\dist(\restrict{b_i}{D},\restrict{a_j}{D}) = 2|D|.
        \end{equation}
        We then lower bound $|D|$.
        Since $\alpha \in \mc{E}_i\cap \mc{E}_j$, $|D_i(\alpha)| \ge t+\frac{s_i}{2}$ and $|D_j(\alpha)| \ge t+\frac{s_j}{2}$.
        Writing $D_i':=D_i(\alpha)\cap (D_i\cap D_j)$ and $D_j':=D_j(\alpha)\cap (D_i\cap D_j)$, we know 
        \begin{equation}  \label{eq: Di' and Dj'}
            \left\{
            \begin{aligned}
                &|D_i'|=|D_i(\alpha)\cap (D_i\cap D_j)|
                = |D_i(\alpha)\setminus D_{ij}|
                \ge t + \frac{s_i}{2} - (t + s_i - g)
                = g - \frac{s_i}{2} \\ 
                &|D_j'|=|D_j(\alpha)\cap (D_i\cap D_j)|
                = |D_j(\alpha)\setminus D_{ji}|
                \ge t + \frac{s_j}{2} - (t + s_j - g)
                = g - \frac{s_j}{2},
            \end{aligned}        
            \right.
        \end{equation}
        and thus
        \begin{equation} \label{eq: size of D}
            \begin{aligned}
                |D| &= |D_i'\cap D_j'|
                = |D_i'| + |D_j'| - |D_i'\cup D_j'| \\
                &\ge |D_i'|+|D_j'|-|D_i\cap D_j|
                \ge g-\frac{s_i}{2}+g-\frac{s_j}{2} - g
                = g-\frac{s_i+s_j}{2}.
            \end{aligned}
        \end{equation}
        We note that the RHS of \cref{eq: size of D} is non-negative because  $2g \ge s_i + s_j$.
        Using \cref{eq: Di difference Dj,eq: Di minus Dj,eq: Dj minus Di,eq: D,eq: size of D},
        \begin{equation} \nonumber
            \begin{aligned}
                2t \,\ge&\, \dist(a_i,b_j)+\dist(b_i,a_j) \\
                \ge&\, \dist(\restrict{a_i}{D_{ij}\cup D_{ji}},\restrict{b_j}{D_{ij}\cup D_{ji}})+\dist(\restrict{b_i}{D_{ij}\cup D_{ji}},\restrict{a_j}{D_{ij}\cup D_{ji}}) + \dist(\restrict{a_i}{D},\restrict{b_j}{D})+\dist(\restrict{b_i}{D},\restrict{a_j}{D}) \\
                \ge&\, |D_{ij}|+|D_{ji}| + 2|D|
                = (t+s_i-g) + (t + s_j - g) + 2g - (s_i+s_j)
                = 2t.
            \end{aligned}
        \end{equation}
        This being an equality implies, in particular, \cref{eq: size of D} is an equality, so \cref{eq: Di' and Dj'} is also an equality.
        The former means $D_i'\cup D_j'=D_i\cap D_j$ while the latter means $D_{ij} \subseteq D_i(\alpha), D_{ji} \subseteq D_j(\alpha)$ and $|D_i(\alpha)|=t+\frac{s_i}{2},|D_j(\alpha)|=t+\frac{s_j}{2}$.
        By the definition of $\mc{E}_i$ and $\mc{E}_j$, $d_i \notin D_i(\alpha)$ and $d_j \notin D_j(\alpha)$, and thus, $d_i\notin D_i'$ and $d_j \notin D_j'$.
        Also, as $D_{ij} \subseteq D_i(\alpha)$ and $D_{ji} \subseteq D_j(\alpha)$, it must be that $d_i,d_j \in D_i\cap D_j$.
        Recall that $d_i=\max(D_i)$ and $d_j=\max(D_j)$.
        This means $d_i=d_j=\max(D_i\cap D_j)$.
        However, as discussed before, $d_i=d_j\notin D_i'\cup D_j'$.
        This contradicts that $D_i'\cup D_j'=D_i\cap D_j$.
        Therefore, $\mc{E}_i$ and $\mc{E}_j$ must be disjoint for all $1 \le i < j \le m$.
    \end{proof}
\end{proof}

Next, we discuss the tightness of \cref{thm: general theorem for f,thm: general theorem for f'}.
For $1 \le d \le n$, an $(n,d)$ error correcting code (ECC) is a 
collection of binary strings (codewords) of length $n$ with all
pairwise distances at least $d$.
Write $A(n,d)$ for the maximum possible size of such a collection.
Taking $n=t+s$, $a_i$ to be any one of the codewords and $b_i=\overline{a_i}$, it is easy to see that $f'(t,s,\{0,1\}) \ge A(t+s,s)$.
As discussed in the Introduction, the upper bound $f'(t,s;\{0,1\})\le \frac{2^{t+s}}{V_{t+s,s}}$ is the Hamming bound for ECC when $s$ is odd.
Thus, we can use perfect codes (ECCs that match the Hamming bound) and their extensions (add a parity bit so that the length and the distance increases by one while the number of codewords stays the same) to show our bound on $f'(t,s,\{0,1\})$ is tight, (and the same also holds for $f(t,s,X)$).
More precisely,
\begin{itemize}
    \item $f'(t,s,\{0,1\}=2^{t+1}$ for $s\in\{1,2\}$. We can take the trivial ECC, all the binary strings of length $t+1$. 
	    There are $2^{t+1}$ of them and all pairwise distances are at least 1. So, $f'(t,1,\{0,1\})=A(t+1,1)=2^{t+1}$. Adding a parity bit to all these strings, the pairwise distances are at least 2. So, $f'(t,2,\{0,1\})=A(t+2,2)=2^{t+1}$.
    \item $f'(t,s,\{0,1\}=\frac{2^{t+3}}{t+4}$ when $s\in\{3,4\}$ and $t+4$ is a power of 2. When $t+4$ is a power of 2, we take the Hamming code: $\frac{2^{t+3}}{t+4}$ binary strings of length $t+3$ and pairwise distances at least $3$. 
	    This shows that $f'(t,3,\{0,1\}=A(t+3,3)=\frac{2^{t+3}}{t+4}$. Adding a parity bit to all these strings, the pairwise distances are at least 4. So, $f'(t,4,\{0,1\})=A(t+4,4)=\frac{2^{t+3}}{t+4}$.
    \item $f'(16,7;\{0,1\})=f'(16,8;\{0,1\})=2048$. Here, we take the Golay code~\cite{Golay49}: 2048 binary strings of length 23 whose pairwise 
	    distances are at least 7. This, as wells as its extension, implies $f'(16,7;\{0,1\})=f'(16,8;\{0,1\})=2048$.
\end{itemize}
Besides the perfect codes, we can take the Bose–Chaudhuri–Hocquenghem codes (BCH codes)~\cite{BCH1,BCH2}. These are $\Omega({2^{t+s}}/{(t+s)^s})$ binary strings of length $t+s$ and pairwise distances at least $s$ whenever $s$ is odd.
Based on the former discussion, this, and its extension, demonstrate 
that for every fixed $s$, 
$f(t,s;X)=\Theta_s(2^{t+s}/V_{t+s,s})$ and $f'(t,s;X)=\Theta_s(2^{t+s}/V_{t+s,s})$.

We note that our probabilistic proof relies crucially 
on the fact that each coordinate is either 0 or 1.
A similar proof by sampling $\alpha \in X^n$ appropriately works for general $X$s but only gives an upper bound of $|X|^{t+1}$.
This is not merely a coincidence: when $|X| \in \{3,4\}$, unlike $f(t;X)=2^{t+1}$, we can prove that $f'(t;X)=\Theta(3^t)$.
\begin{theorem}
    $3^t \le f'(t;X) \le 3^{t+1}$ for every $t \ge 0$ and every $X$ of size 3 or 4.
\end{theorem}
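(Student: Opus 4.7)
The plan is to establish the two bounds by separate techniques. For the lower bound $f'(t;X) \ge 3^t$, I will exhibit an explicit family of $3^t$ pairs in $X^{t+1}$ using a $3$-element subset of $X$ identified with $\mb{Z}_3$. For each $v \in \mb{Z}_3^t$, set $a_v := (v_1, \ldots, v_t, s(v))$ and $b_v := a_v + (1, 1, \ldots, 1) \pmod 3$, where $s(v) := v_1 + \cdots + v_t \pmod 3$ is a parity check. The condition $\dist(a_v, b_v) = t+1$ is immediate. For distinct $v, w$, the per-coordinate contribution to $\dist(a_v, b_w) + \dist(a_w, b_v)$ equals $2$ when $v_k = w_k$ and $1$ otherwise on the first $t$ coordinates, and $2$ when $s(v) = s(w)$ and $1$ otherwise on the parity coordinate. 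A short case analysis then shows the total is at most $2t$: when $v, w$ differ in exactly one coordinate the parity check forces the last coordinate to contribute $1$, and when they differ in at least two coordinates the savings from those coordinates alone suffice.

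For the upper bound $f'(t;X) \le 3^{t+1}$ in the case $|X| = 3$, I will use a probabilistic argument in the spirit of \cref{thm: general theorem for f'}. Sample $\alpha \in X^n$ uniformly, set $D_i := \{k : a_{i,k} \ne b_{i,k}\}$, and let $\mc{E}_i$ be the event that $\alpha_k = a_{i,k}$ for every $k \in D_i$, so that $\Pr[\mc{E}_i] = 3^{-(t+1)}$. I claim the events $\mc{E}_1, \ldots, \mc{E}_m$ are pairwise disjoint, which yields $m \le 3^{t+1}$ via the union bound. To verify disjointness, suppose $\alpha \in \mc{E}_i \cap \mc{E}_j$ and bound $\dist(a_i, b_j) + \dist(a_j, b_i)$ from below coordinate-by-coordinate: on $D_i \cap D_j$ the common value $\alpha_k = a_{i,k} = a_{j,k}$ differs from both $b_{i,k}$ and $b_{j,k}$, contributing $2$; on $D_i \setminus D_j$ we have $a_{j,k} = b_{j,k}$, which cannot equal both $a_{i,k}$ and $b_{i,k}$, so the contribution is at least $1$; and symmetrically on $D_j \setminus D_i$. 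Summing gives at least $|D_i| + |D_j| \ge 2(t+1)$, contradicting the hypothesis $\dist(a_i, b_j) + \dist(a_j, b_i) \le 2t$.

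The main obstacle is the case $|X| = 4$, where the naive extension of the sampling argument only yields $4^{t+1}$. My plan here is to reduce to the $|X| = 3$ bound via a coordinate-wise $3$-colouring $\chi = (\chi_k)_k$ of $X$. For each $k$, consider the "difference graph" $G_k$ on vertex set $X$ with edges $\{a_{i,k}, b_{i,k}\}$ ranging over pairs $i$ with $k \in D_i$. Whenever $G_k \ne K_4$ it is $3$-colourable, and a proper $3$-colouring $\chi_k : X \to \{0,1,2\}$ transports $(a_i, b_i)$ to a valid pair $(\chi(a_i), \chi(b_i))$ over $\{0,1,2\}^n$: the diagonal condition is preserved since $\chi_k$ separates every edge of $G_k$, and the pairwise condition is preserved since coordinate-wise maps cannot increase Hamming distance. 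The $|X|=3$ bound then finishes in that case. The residual case, where some $G_k$ equals $K_4$, should be handled by an averaging argument over the $\binom{4}{2} = 6$ possible two-element merges of $X$: a merge only "kills" pairs whose $\{a_{i,k}, b_{i,k}\}$ equals the merged pair at some $k \in D_i$, and one must show that for a well-chosen merge the number of surviving pairs is a constant fraction of $m$, allowing the $|X|=3$ bound to close the gap. Getting the constant exactly right so that the bound $3^{t+1}$ is recovered is the most delicate step I anticipate.
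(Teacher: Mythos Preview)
Your lower bound is correct and is essentially the paper's construction: both take the $\mb{Z}_3$ parity-check code in $\{0,1,2\}^{t+1}$ with $b=a+\mathbf{1}\pmod 3$, and the computation $\dist(a_v,b_w)+\dist(a_w,b_v)=(t+1)+|\{k:(a_v)_k=(a_w)_k\}|$ is the same.

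Your upper bound, however, has genuine gaps in both cases. For $|X|=3$, the event $\mc{E}_i$ has probability $3^{-|D_i|}$, not $3^{-(t+1)}$; the definition of $f'$ only gives $|D_i|\ge t+1$, and one cannot reduce to equality (modifying a coordinate of $b_i$ can increase $\dist(a_j,b_i)$ and break the pairwise constraint). Your disjointness proof is correct and yields $\sum_i 3^{-|D_i|}\le 1$, but this does not give $m\le 3^{t+1}$ when some $|D_i|>t+1$; and restricting $\mc{E}_i$ to a fixed $(t{+}1)$-subset $D_i'\subseteq D_i$ repairs the probability but breaks disjointness, since on a coordinate $k\in(D_i\cap D_j)\setminus(D_i'\cap D_j')$ you can no longer conclude $a_{i,k}=a_{j,k}$, and that coordinate may contribute $0$. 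For $|X|=4$, a global merge cannot work: once $t\ge 5$ one can have $|D_i|=t+1$ with all six edge types present among $\{a_{i,k},b_{i,k}\}_{k\in D_i}$ for every $i$ (e.g.\ take any such $(a_1,b_1)$ and set $(a_2,b_2)=(b_1,a_1)$), so every merge collapses at least one coordinate of every pair and nothing survives; even independent per-coordinate random $3$-colourings separate a coordinate with probability only $5/6$, yielding at best $m\le(18/5)^{t+1}$ after the reduction.

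The paper's argument is different and handles $|X|\in\{3,4\}$ at once. For each coordinate $k$ independently, sample a uniform $2$-subset $X_k\subseteq X$ and set $a'_{i,k}=1$ iff $a_{i,k}\in X_k$ (similarly $b'_{i,k}$). For distinct $x,y\in X$ one checks $\Pr[\text{exactly one of }x,y\text{ lies in }X_k]=2\binom{|X|-2}{1}/\binom{|X|}{2}=2/3$ in \emph{both} cases $|X|=3$ and $|X|=4$, so $\Pr[\dist(a_i',b_i')\ge t+1]\ge(2/3)^{t+1}$ irrespective of $|D_i|$. Since coordinatewise maps never increase Hamming distance, the surviving indices $I$ form a valid family over $\{0,1\}$, so $|I|\le f'(t;\{0,1\})=2^{t+1}$ by \cref{thm: main result for f'}, whence $m(2/3)^{t+1}\le\EE|I|\le 2^{t+1}$, i.e.\ $m\le 3^{t+1}$. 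The key point you are missing is to project to $\{0,1\}$ and invoke the already-proved binary bound, rather than sample a point in $X^n$ or shrink the alphabet by one.
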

\begin{proof}
    For the lower bound, we assume $\{0,1,2\} \subset X$.
    Define $\varphi:\{0,1,2\}\to \{0,1,2\}$ by $\varphi(0)=1,\varphi(1)=2$ and $\varphi(2)=0$.
    Let $a_1,a_2,\dots, a_m$ be an enumeration of $s \in \{0,1,2\}^{t+1}$ such that $\sum_{k=1}^{t+1} s_k$ is a multiple of 3. Since, for every choice of $s_1, \ldots, s_t$ there is  a unique 
    $s_{t+1}$ such that $\sum_{k=1}^{t+1} s_k$ is a multiple of 3, we have that $m=3^t$.
    For each $i \in [m]$, define $b_i\in\{0,1,2\}^{t+1}$ by $b_{i,k}=\varphi(a_{i,k})$ for all $k\in[t+1]$.
    Clearly $\dist(a_i,b_i)=t+1$ for all $i\in[m]$, and 
    for any $i\neq j$, it holds that 
    \begin{equation} \nonumber
        \begin{aligned}
            \dist(a_i,b_j)+\dist(a_j,b_i)
            =&\,|\{1 \le k \le t+1: a_{i,k}\neq \varphi(a_{j,k})\}|
                +|\{1 \le k \le t+1: \varphi(a_{i,k})\neq a_{j,k}\}| \\
            =&\, t+1 + \{k: a_{i,k}=a_{j,k}\}.
        \end{aligned}
    \end{equation}
    Since $\sum_{k=1}^{t+1} a_{i,k}$ and $\sum_{k=1}^{t+1} a_{j,k}$ are both multiples of 3, $a_i$ and $a_j$ can share at most $t+1-2=t-1$ bits. 
    Therefore, $ \dist(a_i,b_j)+\dist(a_j,b_i) \le t+1+t-1 = 2t$.
    This shows that $f'(t;X) \ge 3^t$.

    We now prove the upper bound.
    Suppose $n > t$ and $a_1,a_2,\dots,a_m,b_1,b_2\dots,b_m \in X^n$ with $\dist(a_i,b_i) \ge t+1$ for $i\in[m]$ and $\dist(a_i,b_j)+\dist(a_j,b_i) \le 2t$ for $i \neq j$.
    For $1 \le k \le n$, independently and uniformly sample a 2-element subset $X_k$ of $X$.
    For each $i \in [m]$, define $a_i'\in\{0,1\}^n$ by $a_{i,k}':=1$ iff $a_{i,k}\in X_i$ for all $k \in [n]$, and  $b_i'\in\{0,1\}^n$ by $b_{i,k}':=1$ iff $b_{i,k}\in X_i$ for all $k \in [n]$.   
    Denote $I:=\{i \in [m]: \dist(a_i',b_i') \ge t+1\}$.
    Using $|X|\in\{3,4\}$, it holds that $\Pr[a'_{i,k}\neq b'_{i,k}] = 2\binom{|X|-2}{1}/\binom{|X|}{2} = \frac{2}{3}$ whenever $a_{i,k}\neq b_{i,k}$.
    So, $\Pr[i \in I]\ge \left(\frac{2}{3}\right)^{t+1}$, and hence, $\EE|I| \ge m \left(\frac{2}{3} \right)^{t+1}$.
    Observe also that $\dist(a_i',b_j') \le \dist(a_i,b_j) \le t$ for any distinct $i,j\in I$, which means $|I| \le f'(t;\{0,1\}) \le 2^{t+1}$.
    Therefore, $m (\frac{2}{3})^{t+1} \le \EE|I| \le 2^{t+1}$, i.e. $m \le 3^{t+1}$.
\end{proof}
We remark that for general $X$, the same argument (by sampling $X_k \in \binom{X}{\floor{|X|/2}}$ instead) shows $3^{t} \le f'(n,t,X) \le \left(\frac{|X|(|X|-1)}{\floor{|X|/2}\ceil{|X|/2}}\right)^{t+1}$.
We do not know which of these bounds is closer to the truth.

\subsection{A set-pair result}
As mentioned in the Introduction, F\"uredi~\cite{Furedi84} proved that if $A_1,A_2,\dots,A_m$ are sets of size $a$ and $B_1,B_2,\dots,B_m$ are sets of size $b$ such that $|A_i\cap B_i|\le k$ for $i\in[m]$ and $|A_i\cap B_j|>k$ for distinct $i,j\in[m]$, then $m \le \binom{a+b-2k}{a-k}$, and this is tight.
In the same paper, he raised the question of understanding the largest 
possible size of a family $(A_i,B_i)_{i=1}^m$ such that $|A_i|=a, |B_i|=b, |A_i\cap B_i| \le \ell$ for all $i \in [m]$ and $|A_i\cap B_j|> k$,
(where $k \geq \ell$ are given) for all distinct $i,j \in [m]$.
His result shows that this maximum is exactly 
$\binom{a+b-2\ell}{a-\ell}$ in case $k=\ell$.
For the general case, Zhu~\cite{Zhu95} showed the answer is at most $\min(\binom{a+b-2\ell}{a-k}/\binom{a-\ell}{k-\ell},\binom{a+b-2\ell}{b-k}/\binom{b-\ell}{k-\ell})$, and this is tight if there is a collection $\mc{A}$ of subsets of $U:=[a+b-2\ell]$, each with size $a-\ell$, such that every subset of $U$ with size $a-k$ is contained in exactly one of $\mc{A}$,
or there is a collection $\mc{B}$ of subsets of $U$ with size $b-\ell$, such that every subset of $U$ with size $b-k$ is contained in exactly one of $\mc{B}$.
These collections are called {\em designs} or {\em Steiner systems} and 
exist when $a-\ell$ is sufficiently larger than $b-\ell$ or $b-\ell$ 
is sufficiently larger than $a-\ell$ provided the appropriate divisibility
conditions hold; see \cite{Keevash14,GKLO23}.

We note that with a slight change of his argument, we can show the answer is at most $\frac{\binom{a+b-2\ell}{a-\ell-x+y}}{\binom{a-\ell}{x}\binom{b-\ell}{y}}$ for every $x,y \ge 0$ with $x+y=k-\ell$.
This is $\exp(O(k-\ell))$ better than his original bound if, say, $a-\ell=b-\ell\gg k-\ell$. Indeed, 
by the general position and the dimension reduction arguments,
used in \cite{Furedi84,Zhu95}, we can essentially assume $\ell=0$ 
(with $a-\ell,b-\ell,k-\ell$ replacing $a,b,k$).
For each $i \in [m]$, we build $\binom{a}{x}\binom{b}{y}$ pairs of sets 
based on $(A_i,B_i)$ by shifting, in all possible ways, 
a subset $X$ of $x$ elements of $A_i$ from $A_i$ to $B_i$,
and a subset $Y$ of $y$ elements of $B_i$ from $B_i$ to $A_i$.
This gives $m\binom{a}{x}\binom{b}{y}$ pairs $(A_i^{\scriptscriptstyle{X,Y}},B_i^{\scriptscriptstyle{X,Y}})$ with $|A_i^{\scriptscriptstyle{X,Y}}|=a-x+y,|B_i^{\scriptscriptstyle{X,Y}}|=b-y+x,A_i^{\scriptscriptstyle{X,Y}}\cap B_i^{\scriptscriptstyle{X,Y}}=\emptyset$. We also have $|A_i^{\scriptscriptstyle{X,Y}}\cap B_i^{\scriptscriptstyle{X',Y'}}|>0$ if $X\neq X'$ and $|A_i^{\scriptscriptstyle{X,Y}}\cap B_j^{\scriptscriptstyle{X',Y'}}|>0$ if $i\neq j$, since $|X|\le k-|Y'|=|X'|$.
Now, we can apply the result of Bollob\'as~\cite{Bollobas65} to conclude that $m\binom{a}{x}\binom{b}{y}\le\binom{a+b}{a-x+y}$, as desired.

Moreover, \cref{thm: general theorem for f'} gives the following variation of F\"uredi's question where instead of $|A_i|=a,|B_i|=b$, we only require $|A_i|+|B_i|=s$.
\begin{theorem}
    Let $s > k \ge \ell \ge 0$ and $m \ge 0$.
    Suppose $A_1,A_2,\dots,A_m, B_1,B_2,\dots,B_m$ are sets such that $|A_i|+|B_i|=s$ for all $i\in[m]$, $|A_i\cap B_i| \le \ell$ for all $i \in [m]$ and $|A_i\cap B_j|+|A_j\cap B_i|\ge 2(k+1)$ for all $1 \le i < j \le m$.
    Then, $m \le f'(s-2(k+1),2(k+1)-2\ell;\{0,1\})\le \frac{2^{s-2\ell-1}}{\sum_{i=0}^{k-\ell}\binom{s-2\ell-1}{i}}$.
\end{theorem}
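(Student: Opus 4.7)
The plan is to reduce the set-pair inequality to \cref{thm: general theorem for f'} by encoding the sets as binary vectors. Fix a universe $U$ containing all the $A_i$ and $B_i$, padded with dummy elements if necessary so that $|U|>s-2(k+1)$. (If $s<2(k+1)$, then $|A_i\cap B_j|+|A_j\cap B_i|\le|A_j|+|B_j|=s<2(k+1)$ for every $i\ne j$, so the hypothesis forces $m\le1$ and the bound is trivial; we henceforth assume $s\ge 2(k+1)$.) For each $i\in[m]$, let $a_i,b_i\in\{0,1\}^{|U|}$ be the indicator vectors of $A_i$ and $B_i$.

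The crucial identity is $\dist(a,b)=|A|+|B|-2|A\cap B|$ for the indicator vectors of $A,B\subseteq U$. Substituting $|A_i|+|B_i|=s$ turns the hypotheses into
\begin{align*}
\dist(a_i,b_i) &= s-2|A_i\cap B_i|\ge s-2\ell, \\
\dist(a_i,b_j)+\dist(a_j,b_i) &= 2s-2\bigl(|A_i\cap B_j|+|A_j\cap B_i|\bigr)\le 2s-4(k+1).
\end{align*}
Setting $t:=s-2(k+1)$ and $s^\star:=2(k+1)-2\ell$, these read $\dist(a_i,b_i)\ge t+s^\star$ and $\dist(a_i,b_j)+\dist(a_j,b_i)\le 2t$, which are exactly the conditions defining $f'(t,s^\star;\{0,1\})$. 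Hence $m\le f'(s-2(k+1),2(k+1)-2\ell;\{0,1\})$, and invoking the quantitative bound in \cref{thm: general theorem for f'} yields $m\le 2^{s-2\ell}/V_{s-2\ell,\,2(k-\ell+1)}$.

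The remaining step is a routine binomial simplification of $V_{s-2\ell,\,2(k-\ell+1)}$. Since $d:=2(k-\ell+1)$ is even with $d/2-1=k-\ell$, the even-case formula in \eqref{eq: V} combined with Pascal's rule $\binom{s-2\ell}{i}=\binom{s-2\ell-1}{i}+\binom{s-2\ell-1}{i-1}$ gives $V_{s-2\ell,\,2(k-\ell+1)}=2\sum_{i=0}^{k-\ell}\binom{s-2\ell-1}{i}$, so the upper bound becomes exactly $\frac{2^{s-2\ell-1}}{\sum_{i=0}^{k-\ell}\binom{s-2\ell-1}{i}}$. I do not foresee a genuine obstacle: the only real idea is to exploit the constraint $|A_i|+|B_i|=s$ to convert the intersection-size conditions linearly into Hamming-distance conditions, after which \cref{thm: general theorem for f'} does all the work and the rest is bookkeeping.
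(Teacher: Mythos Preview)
Your proposal is correct and follows essentially the same route as the paper: encode $A_i,B_i$ by their indicator vectors, use $\dist(a,b)=|A|+|B|-2|A\cap B|$ together with $|A_i|+|B_i|=s$ to translate the hypotheses into the $f'(t,s^\star;\{0,1\})$ conditions with $t=s-2(k+1)$ and $s^\star=2(k+1)-2\ell$, apply \cref{thm: general theorem for f'}, and simplify $V_{s-2\ell,2(k-\ell+1)}$ via the even-case identity (the paper invokes \cref{claim: volume properties} rather than Pascal directly, but it is the same computation). Your explicit handling of the degenerate case $s<2(k+1)$ and the padding to ensure $|U|>t$ are minor extra care the paper leaves implicit.
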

\begin{proof}
    Suppose all sets are subsets of $[n]$ for some $n \in \mb{N}$.
    For $1 \le i \le m$, let $a_i\in\{0,1\}^n$ be the indicator vector of $A_i$, i.e. $a_{i,k}=1$ iff $k\in A_i$; similarly, let $b_i\in\{0,1\}^n$ be that of $B_i$.
    Then, for $i\neq j$,
    \begin{equation} \nonumber
        \left\{
        \begin{aligned}
            \dist(a_i,b_i)&=|A_i|+|B_i|-2|A_i\cap B_i|\ge s-2\ell,\\
            \dist(a_i,b_j)+\dist(a_j,b_i)
            &=|A_i|+|B_j|-2|A_i\cap B_j| + |A_j|+|B_i|-2|A_j\cap B_i|
            \le 2s-4(k+1).
        \end{aligned}
        \right.
    \end{equation}
    So, 
    $$
        m \le f'(s-2(k+1),2(k+1)-2\ell;\{0,1\})\le \frac{2^{s-2\ell}}{V_{s-2\ell,2(k+1)-2\ell}}
        = \frac{2^{s-2\ell-1}}{V_{s-2\ell-1,2(k+1)-2\ell-1}}
        = \frac{2^{s-2\ell-1}}{\sum_{i=0}^{k-\ell}\binom{s-2\ell-1}{i}}.
    $$
\end{proof}
Note that this bound is close to being tight when $s-2\ell \gg k-\ell$.
In this case, we can take the BCH code of length $s-2\ell-1$ and 
pairwise distances at least $2k-2\ell+1$.
Appending to each codeword a parity bit, we get $\Omega(2^{s-2\ell}/(s-2\ell)^{k-\ell})$ binary strings of length $s-2\ell$ and pairwise distances at least $2k-2\ell+2$.
Now, take $A_i\subseteq [s-2\ell]$ to be the set corresponding to each codeword joined with $\{-1,-2,\dots,-\ell\}$ and $B_i:=([s-2\ell]\setminus A_i) \cup \{-1,-2,\dots,-\ell\}$.
Then, $|A_i|+|B_i|=s,|A_i\cap B_i|=\ell$ for $i\in [m]$ and $|A_i\cap B_j|+|A_j\cap B_i|\ge 2\ell+2(k-\ell+1)=2k+2$ for $i\neq j$, forming the desired family.

\section{Related questions}\label{sec: related questions}

\subsection{Fractional-Helly-type and $(p,q)$-type problems for Hamming balls}
In this section, we  establish fractional-Helly and $(p,q)$ theorems for Hamming balls. For both of them, when $X$ is finite, we need only the information about pairs of Hamming balls, and that of $(t+2)$-tuples of Hamming balls when $X$ is infinite. Notably, both constants $2$ and $t+2$ are optimal. This is in strong contrast
with the bounds that can be obtained, using the general results~\cite{AKMM02,HL21}, which make use of the Radon number $r(\mc{C}_H)=2^{t+1}+1$. These results would imply 
fractional-Helly and $(p,q)$ theorems for Hamming balls where one requires the information abut $\ell$-tuples for $\ell> 2^{t+1}$.

We say a point {\em hits} a Hamming ball if the ball contains the point and a set of points {\em hits} a collection of Hamming balls if every ball contains some point in the set.

\begin{theorem} \label{thm: fractional helly}
    Let $m\ge 1,n > t \ge 0$, $X$ be any nonempty set and $B_1,\dots,B_m$ be Hamming balls of radius $t$ centered at $a_1,\dots,a_m \in X^n$, respectively.
    \begin{description}
        \item[(1)] If $X$ is finite and, for some $\alpha > \frac{12}{m}$, at least $\alpha\binom{m}{2}$ (unordered) pairs of the Hamming balls intersect, then some point in $X^n$ hits an $\Omega\!\left({\alpha^2}|X|^{-t}\big/{\binom{4t}{t}} \right)$ fraction of the balls.
        \item[(2)] If, for some $\alpha > 0$, at least $\alpha\binom{m}{t+2}$ unordered $(t+2)$-tuples (where tuples have distinct entries) of the Hamming balls intersect, then some point in $X^n$ hits an $\Omega(\alpha/(e(t+1))^{t+1})$ fraction of the balls.
    \end{description}
\end{theorem}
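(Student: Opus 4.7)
My plan is to use the classical fractional-Helly template of double counting and pigeonhole, combined with the Hamming-ball structural tools from \cref{thm: general theorem for f}. The main new ingredient is a canonical-witness construction for each intersecting sub-family, chosen to live in an $n$-independent combinatorial window.

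For part (1), I would work with the intersection graph $G$ on $[m]$, which has $|E(G)|\ge\alpha\binom{m}{2}$ edges. The key is to exploit the cherry count $\sum_i\binom{\deg_G(i)}{2}\ge\Omega(\alpha^2 m^3)$ rather than the edge count itself. For each edge $(i,j)\in E(G)$, the difference set $D_{ij}:=\{k:a_{i,k}\ne a_{j,k}\}$ satisfies $|D_{ij}|\le 2t$, and I would choose a canonical witness $p_{ij}\in B_i\cap B_j$ agreeing with $a_i=a_j$ on $[n]\setminus D_{ij}$ and flipping to the $a_j$-values on a fixed subset $F_{ij}\subseteq D_{ij}$ of size $\lfloor|D_{ij}|/2\rfloor$. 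For a cherry $(j,i_0,k)$, the combined support $D_{i_0 j}\cup D_{i_0 k}$ lies in a $4t$-window of coordinates around $a_{i_0}$, so the canonical witnesses produced by the cherries at $i_0$ fall into at most $\binom{4t}{t}|X|^t$ distinct ``local types'' (the $\binom{4t}{t}$ from choosing flipped coordinates inside the window, the $|X|^t$ from their values). Pigeonholing the $\Omega(\alpha^2 m^3)$ cherries against these $|X|^t\binom{4t}{t}$ types per central ball should yield a single point contained in an $\Omega(\alpha^2|X|^{-t}/\binom{4t}{t})$ fraction of the balls.

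For part (2), I would work with the $(t+2)$-uniform intersection hypergraph $H$ on $[m]$, $|E(H)|\ge\alpha\binom{m}{t+2}$. Double counting pairs $(J,k)$ with $J\in\binom{[m]}{t+1}$, $k\notin J$, and $J\cup\{k\}\in E(H)$ produces a $(t+1)$-subset $J$ extended to an edge of $H$ by at least $\alpha(m-t-1)$ different $k$. The region $Y:=\bigcap_{j\in J}B_j$ is non-empty (witnessed by any extension), and each extending $B_k$ meets $Y$. The key step is a structural reduction: the relevant witness points $p_k\in B_k\cap Y$ can be canonically chosen from a space of size at most $(e(t+1))^{t+1}$, since each of the $t+1$ Hamming inequalities defining $Y$ restricts the choice of $p$ on at most $t$ coordinates, and the elementary bound $\binom{n}{k}\le(en/k)^k$ collapses the canonical-pattern count to $(e(t+1))^{t+1}$. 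Pigeonhole on this space then yields a single point in an $\Omega(\alpha/(e(t+1))^{t+1})$ fraction of the balls.

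The main obstacle for (1) is making the canonical witnesses $p_{ij}$ truly $n$-independent. Although the subsets $D_{ij}$ live in $[n]$, the cherry-based double count reduces the effective witness space to the $|X|^t\binom{4t}{t}$ combinatorial types inside a $4t$-window, and the $\alpha^2$ in the bound arises from the cherry count rather than a single intersecting pair. The main obstacle for (2) is executing the coordinate-wise canonicalization at threshold $t+2$: the proof cannot invoke Helly at the too-low threshold $t+1$, so the $(e(t+1))^{t+1}$ bound must be obtained directly from the explicit structure of $(t+1)$-fold Hamming intersections together with the elementary binomial bound.
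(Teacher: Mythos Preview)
Your plan for part (2) has a genuine gap. Averaging does give you a $(t+1)$-set $J$ with at least $\alpha(m-t-1)$ extensions $k$ such that $\bigcap_{j\in J}B_j\cap B_k\neq\emptyset$, but the assertion that the witnesses $p_k$ can be chosen from a set of size $(e(t+1))^{t+1}$ depending only on $J$ is false in general. Take $t=1$, $X$ infinite, $a_1=(0,0,0,\dots)$, $a_2=(1,0,0,\dots)$, so $J=\{1,2\}$ and $Y=\bigcap_{j\in J}B_j=\{(x,0,0,\dots):x\in X\}$. For each $x\in X$ set $a_{k_x}=(x,1,0,\dots)$; then $B_1\cap B_2\cap B_{k_x}=\{(x,0,0,\dots)\}$, so every $k_x$ is a valid extension, yet no finite set of points can hit all the $B_{k_x}$'s. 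The obstruction is that $\varphi(a_1,a_2)=1>0$: there is a ``free'' coordinate on which the witness can take any value in $X$. The paper handles this not by a one-shot averaging but by an \emph{iterative} construction: it grows a sequence $i_1,\dots,i_\ell$ along which $\varphi$ strictly decreases while maintaining many good tuples through it, stopping either when $\ell=t+1$ (forcing $\varphi=0$) or when maximality forces $\varphi(a_{i_1},\dots,a_{i_\ell},a_k)=\varphi(a_{i_1},\dots,a_{i_\ell})$ for all remaining extensions $k$. Only under that equality does the canonical set $W(a_{i_1},\dots,a_{i_\ell})$ of size $(e\ell)^t$ provably hit every $B_k$ (this is \cref{claim: properties of W}(2)). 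Your direct double-count does not secure this condition.

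Your plan for part (1) has a related problem. You assert that for a fixed center $i_0$ the canonical witnesses of all cherries at $i_0$ lie in $\binom{4t}{t}|X|^t$ ``local types'', but the $4t$-window $D_{i_0 j}\cup D_{i_0 k}$ moves with each cherry, so there is no single window of size $4t$ around $a_{i_0}$; across different cherries the flipped coordinates can range over all of $[n]$. The paper avoids this by pigeonholing \emph{twice}, fixing both $u$ and $v$ (so the window $D=\{k:a_{u,k}\neq a_{v,k}\}$ is fixed), and then applying \cref{lemma: covering by distance}: a set of $\binom{4t}{t}|X|^t$ points, supported on this fixed $D$, hits every $B(p,t)$ with $\dist(a_u,p)\le\min(\dist(a_u,a_v),2t)$ and $\dist(a_v,p)\le 2t$. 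To ensure the third vertices $w$ satisfy these hypotheses the paper introduces a ``good triple'' orientation of each path $uvw$; your outline does not address why the canonical witness $p_{i_0 j}$ (which only hits $B_{i_0}$ and $B_j$) would hit $B_k$ for the other leg of the cherry.
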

\begin{theorem}\label{thm: hadwiger-debrunner}
    Let $m \ge 1, n > t \ge 0, p \ge q \ge 2$, and $X$ be a nonempty set.
    Let $B_1,B_2,\dots, B_m$ be Hamming balls of radius $t$ centered at $a_1,a_2,\dots,a_m \in X^n$, respectively, where out of any $p$ balls, $q$ of them intersect.
    \begin{itemize}
        \item If $|X|<\infty$ and $q \leq t+1$, then there exist $p^qq^t|X|^{t+2-q}2^{O(t)}$ points in $X^n$ hitting all these Hamming balls;
        \item if $q=t+2$, then there exist $O(e^{2t}p^{t+1})$ points in $X^n$ hitting all these Hamming balls.
    \end{itemize}
\end{theorem}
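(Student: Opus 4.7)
The plan is to apply the standard framework of Alon, Kleitman, Matou\v{s}ek and Meshulam, which converts a fractional Helly theorem into a $(p,q)$-theorem by combining LP duality with weak $\epsilon$-nets. The two parts of Theorem \ref{thm: fractional helly} will serve as the fractional Helly input in the two cases of the statement.

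The first step is to promote Theorem \ref{thm: fractional helly} to a \emph{weighted} version: for every probability distribution $\mu$ on $\{B_1,\ldots,B_m\}$ and every $\alpha>0$, if a $\mu^{\otimes k}$-random $k$-tuple has a common point with probability at least $\alpha$ (with $k=2$ in the finite-$X$ regime and $k=t+2$ in general), then some point of $X^n$ has $\mu$-mass at least the quantity $\delta=\delta(\alpha,t,|X|)$ supplied by Theorem \ref{thm: fractional helly}. This upgrade is routine: replicate each ball proportionally to a rational approximation of $\mu$ and apply the unweighted statement to the resulting multifamily, then take a limit.

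Next, by a double-counting over random $p$-subsets, the $(p,q)$-hypothesis forces that under every weighting $\mu$ a $\mu$-random $k$-tuple has a common point with probability at least $\binom{q}{k}\big/\binom{p}{k}$. Feeding this $\alpha$ into the weighted fractional Helly statement produces, for every $\mu$, a point in $X^n$ whose $\mu$-mass is at least an explicit $\delta$. By LP duality between fractional covering and fractional matching, this means the fractional transversal number of the family is at most $1/\delta$. A careful accounting here---exploiting that a single intersecting $q$-tuple already contributes $\binom{q}{k}$ intersecting $k$-tuples to the average---is what will allow the exponent of $p$ to drop by one compared to the naive bound, yielding the factor $p^q$ in the first case and $p^{t+1}$ (rather than $p^{t+2}$) in the second.

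Finally, the fractional transversal must be rounded to an integer one. When $|X|$ is finite, the family of Hamming balls of radius $t$ has dual shatter function bounded in terms of $t$ and $|X|$, and the classical weak $\epsilon$-net theorem produces a hitting set of size $O(1/\delta)$ up to a $2^{O(t)}$ loss; together with the value of $\delta$ this gives the first bound $p^{q}q^{t}|X|^{t+2-q}2^{O(t)}$, where the factor $|X|^{t+2-q}$ is inherited from the $|X|^{-t}$ in Theorem \ref{thm: fractional helly}(1). When $|X|$ is allowed to be infinite no VC-type bound is available, so I would instead iterate Theorem \ref{thm: fractional helly}(2) directly: at each step locate a point that hits a constant fraction of the remaining $\mu$-mass, add it to the transversal, and renormalize; the geometric decay of the residual weight delivers the second bound $O(e^{2t}p^{t+1})$. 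The main obstacle will be this last iteration in the infinite-$X$ case: one has to arrange the weak-net construction so that only a constant factor of mass is lost per round (rather than a polylogarithmic one), which is precisely what distinguishes the stated $p^{t+1}e^{2t}$ from a cruder $p^{t+2}e^{O(t)}\log p$ bound.
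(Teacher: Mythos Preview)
Your plan goes through the Alon--Kleitman--Matou\v{s}ek--Meshulam machinery, but the paper does \emph{not}: its proof is direct and constructive, based on the free-dimension function $\varphi$ of Definition~\ref{def: free dim}. The paper greedily builds a maximal sequence $x_1,\dots,x_\ell\in A$ with $\ell<p$ along which $\varphi$ strictly decreases on every increasing $k$-subsequence (for $k<q$); maximality forces every center $a$ to satisfy either $\varphi(x_{i_1},\dots,x_{i_k},a)=\varphi(x_{i_1},\dots,x_{i_k})\ge0$ for some $k<q$, in which case $a$ is hit by the explicit set $W_J$ of Lemma~\ref{claim: properties of W}, or the analogous condition with $k=q-1$ and strict drop, in which case Claim~\ref{claim: small distance to candidates} combined with Lemma~\ref{lemma: covering by distance} (applied with $\delta=q-2$) supplies the hitting set. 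The exponents $p^q$, $q^t$, $|X|^{t+2-q}$ and $p^{t+1}$ are read off directly from $\binom{\ell}{<q}$, $|W_J|\le(e|J|)^t$, $|Y_{w,k}|\le\binom{4t+2-q}{t+2-q}|X|^{t+2-q}$, and $\binom{p}{\le t+1}$ respectively.

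Your route, as written, does not reach the stated bounds. Three concrete gaps:
\begin{itemize}
\item \textbf{The $|X|$ exponent in the first case.} Theorem~\ref{thm: fractional helly}(1) gives $\delta=\Omega(\alpha^2|X|^{-t}/\binom{4t}{t})$, so $1/\delta$ carries a factor $|X|^{t}$, not $|X|^{t+2-q}$. Nothing in the pair-based fractional Helly knows about $q$, so there is no mechanism by which the exponent of $|X|$ can drop from $t$ to $t+2-q$; your sentence ``the factor $|X|^{t+2-q}$ is inherited from the $|X|^{-t}$'' is simply wrong. In the paper the saving comes from invoking Lemma~\ref{lemma: covering by distance} with $\delta=q-2$, which has no analogue in the AKMM pipeline.
\item \textbf{The $p$ exponent.} The double-counting you describe yields exactly $\alpha\ge\binom{q}{k}/\binom{p}{k}$ and nothing more; it is already the standard estimate and does not buy an extra factor of $p$. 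With $k=t+2$ and $q=t+2$ this gives $\alpha\ge1/\binom{p}{t+2}$, hence $1/\delta=O\big(\binom{p}{t+2}(e(t+1))^{t+1}\big)=O(e^{2t}p^{t+2})$, not $O(e^{2t}p^{t+1})$. Likewise in the first case you would obtain $p^4$ (from $\alpha^2$ with $\alpha\approx(q/p)^2$), not $p^q$.
\item \textbf{Rounding the fractional transversal.} For infinite $X$, your proposed iteration (hit a $\delta$-fraction, remove, repeat) terminates only after $\Theta((1/\delta)\log m)$ steps and thus produces a bound depending on $m$, which the theorem forbids. For finite $X$, you appeal to a dual-shatter bound ``in terms of $t$ and $|X|$'', but the VC-type parameters of radius-$t$ Hamming balls in $X^n$ depend on $n$; you have not shown any weak $\epsilon$-net of size independent of $n$. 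The paper sidesteps all of this by never passing through a fractional transversal.
\end{itemize}
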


\noindent
{\bf Remark.}\,
    Suppose $B_1,B_2,\dots,B_m$ are Hamming balls of radius $t$. If 
	every $t+2$ of them intersect, then there exist $O_t(1)$ points in $X^n$ hitting all of them (by \cref{thm: hadwiger-debrunner}). On the other hand the number 
	$t+2$ cannot be replaced by $t+1$, as can be seen by taking
	$n=t+1$ and an infinite $X$. 
	But if any $2^{t+1}$ of them intersect, then all of them intersect (\cref{t11}).
    This behavior differs from the setting of Helly's theorem, where a family of convex sets in $\mb{R}^d$ is considered: if every $d+1$ of them intersect, then all intersect, but even if every $d$ of them intersect, there still can be no finite bound for the minimum number of points required
	to hit all of them.

We first give a simple proof for \cref{thm: fractional helly}(1).
To this end, we need the following lemma whose proof is delayed.
\begin{lemma} \label{lemma: covering by distance}
    Let $n > t \ge \delta \ge 0$, $X$ be a finite nonempty set, and $a,b \in X^n$.
    Then, there is a set of $\binom{4t-\delta}{t-\delta}|X|^{t-\delta}$ points in $X^n$ hitting all the Hamming balls $B(p,t)$ with $\dist(a,p)\le \min(\dist(a,b),2t-\delta)$ and $\dist(b,p) \le 2t$.
\end{lemma}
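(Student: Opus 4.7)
The plan is to exhibit an explicit hitting set $Q$ by perturbing $a$ on a small subset of $D:=\{i:a_i\neq b_i\}$. First, dispose of the trivial case $d:=\dist(a,b)>4t-\delta$: the hypotheses $\dist(a,p)\le 2t-\delta$ and $\dist(b,p)\le 2t$ force, via the triangle inequality, $d\le 4t-\delta$, so there is nothing to hit and $Q=\emptyset$ suffices. Assuming $d\le 4t-\delta$, fix any set $W\subseteq[n]$ with $D\subseteq W$ and $|W|=\min(4t-\delta,n)$. For each $F\in\binom{W}{t-\delta}$ and each $g:F\to X$, let $q^{F,g}\in X^n$ be defined by $q^{F,g}_i=g(i)$ for $i\in F$ and $q^{F,g}_i=a_i$ for $i\notin F$, and let $Q$ be the collection of all such $q^{F,g}$. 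Then $|Q|\le\binom{|W|}{t-\delta}|X|^{t-\delta}\le\binom{4t-\delta}{t-\delta}|X|^{t-\delta}$.

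Next, I verify that $Q$ hits every target ball $B(p,t)$. Fix $p$ satisfying the hypotheses; the idea is to pick $F\subseteq D$ with $|F|=t-\delta$ and take $g$ to be the restriction of $p$ to $F$, so that $\dist(q^{F,g},p)=|S_a\setminus F|$ with $S_a:=\{i:p_i\neq a_i\}$. Split $[n]$ into $D$ and its complement, and set $A_p=\{i\in D:p_i=a_i\}$, $B_p=\{i\in D:p_i=b_i\}$, $C_p=\{i\in D:p_i\notin\{a_i,b_i\}\}$, $P_2=\{i\in D^c:p_i\neq a_i\}$; since $F\subseteq D$, one checks $|S_a\setminus F|=|P_2|+|(B_p\cup C_p)\setminus F|$.

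The crucial bound is $|P_2|\le t$, obtained from the pointwise identity $\dist(a,p)+\dist(b,p)=d+|C_p|+2|P_2|$ together with the hypotheses, which give $|C_p|+2|P_2|\le 2t$ after a brief case split on whether $d\le 2t-\delta$. Given this, split on $|B_p\cup C_p|$: if $|B_p\cup C_p|\le t-\delta$, pick any $F\subseteq D$ of size $t-\delta$ with $F\supseteq B_p\cup C_p$, so $|(B_p\cup C_p)\setminus F|=0$ and $\dist(q^{F,g},p)=|P_2|\le t$. Otherwise pick any $F\subseteq B_p\cup C_p$ with $|F|=t-\delta$; then $|(B_p\cup C_p)\setminus F|=d-|A_p|-(t-\delta)$, and using $|P_2|+d-|A_p|=\dist(a,p)\le 2t-\delta$ one gets $|S_a\setminus F|\le t$. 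The degenerate case $d<t-\delta$ is immediate since then $\dist(a,p)\le d<t$, and any $F$ works.

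The main obstacle I foresee is justifying the restriction $F\subseteq D$: a priori one worries that $P_2$, which can sit in an essentially arbitrary part of $D^c$, must also be captured by $F$, and doing so would force a factor depending on $n$. The saving point is exactly the bound $|P_2|\le t$, which lets the entire $D^c$-contribution to $\dist(q,p)$ be absorbed into the distance budget $t$ without having to locate $P_2$ in the ambient space. This is what produces a covering set whose size depends only on $t$, $\delta$, and $|X|$, independent of $n$.
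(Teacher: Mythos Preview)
Your proof is correct and is essentially the same as the paper's: both perturb $a$ on subsets of $D$ of size at most $t-\delta$, establish the key bound $|P_2|\le t$ (the paper writes this as $|D_p\setminus D|\le t$, derived from $|D\triangle D_p|\le\dist(b,p)\le 2t$ together with $|D_p|\le|D|$), and then split on whether $|D_p\cap D|=|B_p\cup C_p|$ exceeds $t-\delta$. Your case split on whether $d\le 2t-\delta$ is in fact unnecessary, since $\dist(a,p)\le\min(d,2t-\delta)\le d$ already gives $|C_p|+2|P_2|\le 2t$ directly, but this is harmless.
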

We remark that when $t=\delta$, we do not require $|X|<\infty$ because then, all $B(p,t)$s under consideration contains $a$.
\begin{proof}[Proof for \cref{thm: fractional helly}(1)]
    We may assume $m \ge 12$ as otherwise $\alpha > 1$.
    Construct a graph $G$ with vertex set $V(G)=[m]$ where $i$ and $j$ are adjacent if $B_i$ and $B_j$ intersect, i.e. $\dist(a_i,a_j) \le 2t$.
    Starting from $G$, by iteratively deleting vertices of degree 
	smaller than $\alpha(m-1)/2$ as long as there are such vertices, 
	we arrive at an induced subgraph $G'$ of $G$.
    By assumption, $e(G) \ge \alpha\binom{m}{2}=m\cdot \alpha(m-1)/2$.
    This means $G'$ is not empty and hence, the minimum degree of $G'$ is at least $\alpha(m-1)/2 \ge \alpha m/3$ (using $m \ge 12$).
    Fix any vertex $u \in V(G')$.
    The number of paths $uvw$ in $G'$ is at least $\alpha m/3\cdot(\alpha m/3-1)\ge \alpha^2m^2/12$, using $\alpha \ge 12/m$.

    An (ordered) triple of distinct vertices $(x,y,z) \in V(G')^3$ is said to be {\em good} if $\dist(a_x,a_z) \le \min(\dist(a_x,a_y),2t)$ and $\dist(a_y,a_z) \le 2t$.
    Observe that for any path $uvw$ in $G$,
    \begin{itemize}
        \item if $uw \notin E(G)$, then $\dist(a_v,a_w) > 2t$ and $\dist(a_u,a_v) \le 2t < \dist(a_u,a_w)$, so $(u,w,v)$ is good; 
        \item if $uw \in E(G)$, then $\dist(a_v,a_w) \le 2t, \dist(a_u,a_v)\le\dist(a_u,a_w)\le 2t$ (so $(u,w,v)$ is good) or $\dist(a_v,a_w) \le 2t, \dist(a_u,a_w)\le\dist(a_u,a_v)\le 2t$ (so $(u,v,w)$ is good).
    \end{itemize}
    Enumerating over all paths of length 2, there are at least $\alpha^2m^2/24$ good triples $(u,v,w)$, where $u$ is fixed (as each good triple is counted at most twice).
    By the pigeonhole principle, there exist $u,v\in [m]$ and $W\subseteq [m]$ such that $|W| \ge \alpha^2 m/24$ and $(u,v,w)$ is good for all $w \in W$.
    Then, \cref{lemma: covering by distance} with $\delta=0, a=a_u,b=a_v, p=a_w$  guarantees $\binom{4t}{t}|X|^{t}$ points in $X^n$ hitting every $B_w$, $w \in W$.
    Therefore, some point among these $\binom{4t}{t}|X|^{t}$ points hits at least $\frac{\alpha^2 m}{24\binom{4t}{t}|X|^t}=\Omega(\alpha^2m|X|^{-t}\big/\binom{4t}{t})$ balls.
\end{proof}

We now provide the following definitions that are useful in the proof of \cref{thm: fractional helly}(1) and of \cref{thm: hadwiger-debrunner}.
\begin{definition}\label{def: free dim}
    Let $m\ge 0, n>t\ge 0$, let 
	$X$ be a nonempty set, and $a_1,\dots,a_m\in X^n$.
    Define $\free(a_1,a_2,\dots,a_m;t)$ to be the largest size of $K\subseteq [n]$ such that for some $w\in X^n$, $\dist(\restrict{w}{K^c},\restrict{a_i}{K^c})+|K| \le t$ for all $i \in [m]$.
    Define $\free(a_1,a_2,\dots,a_m;t):=-\infty$ if no such $K$ exists.
\end{definition}
This definition says that, without looking at the coordinates indexed by $k \in K$, there exists some point lying in all the Hamming balls of radius $t-|K|$ centered at $a_i$, $1 \le i \le m$.
In other words, we can freely choose the coordinates in $K$ for $w$ while maintaining that $w \in \bigcap_{i=1}^m B(a_i,t)$.
We note that $\free(a_1,\dots,a_{m+1};t) \le \free(a_1,\dots,a_{m};t)\le t$, $\free(a_1;t)=t$ and $\free(a_1,\dots,a_m;t) \ge 0$ if and only if $\bigcap_{i=1}^m B(p_i,t) \neq \emptyset$.
In addition, we can assume that $w_k\in\{a_{1,k},a_{2,k},\dots,a_{m,k}\}$ for all $k\in [n]\setminus K$ because otherwise, we should have considered $K':=K\cup \{k\}$.
This motivates the following definition.
\begin{definition} \label{def: candidates given by free dim}
    Let $m\ge 0, n>t\ge 0$, let $X$ be a nonempty set, and $a_1,\dots,a_m\in X^n$.
    Define $W(a_1,a_2,\dots,a_m;t)$ to be the set of $w \in \bigcap_{i=1}^m B(a_i,t)$ where $w_k\in\{a_{1,k},a_{2,k},\dots,a_{m,k}\}$ for all $k\in[n]$.
\end{definition}
When it is clear from the context, we omit $t$ in $\free(\cdot)$ and $W(\cdot)$.
The following crucial property, whose proof is delayed, shows how 
to use $\varphi(\cdot)$ in order to find a ``small'' set hitting the Hamming balls.
\begin{lemma} \label{claim: properties of W}
    Let $m\ge 1, n>t\ge 0$, let $X$ be any nonempty set, and $a_1,\dots,a_m\in X^n$.
    \begin{enumerate}
        \item[(1)] $|W(a_1,a_2\dots,a_m)| \le (em)^t$;
        \item[(2)] $W(a_1,a_2\dots,a_m)$ hits $B(a,t)$ for any $a\in X^n$ with $\free(a_1,a_2,\dots,a_{m},a)=\free(a_1,a_2,\dots,a_m)\ge 0$.
    \end{enumerate}
\end{lemma}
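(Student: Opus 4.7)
For each $w \in W(a_1,\dots,a_m)$, I would parameterize $w$ by its differences from $a_1$: the set $T(w) := \{k : w_k \neq a_{1,k}\}$ has $|T(w)| \le t$ (since $w \in B(a_1,t)$), and for each $k \in T(w)$ the value $w_k$ lies in $\{a_{2,k},\dots,a_{m,k}\} \setminus \{a_{1,k}\}$, so admits at most $m-1$ options. The naive count $\sum_{s \le t}\binom{n}{s}(m-1)^s$ depends on $n$, so the first step is to observe $T(w) \subseteq D := \bigcup_{i \ge 2}\{k : a_{i,k}\neq a_{1,k}\}$; and since $W \neq \emptyset$ gives $\dist(a_1,a_i)\le 2t$ by the triangle inequality, we have $|D| \le 2t(m-1)$. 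A direct $\binom{|D|}{\le t}(m-1)^t$ bound would only yield $(cm^2)^t$, so to reach the sharp $(em)^t$ one must exploit the constraints $\dist(w,a_i) \le t$ for \emph{every} $i$, not only $i = 1$: these force $T(w)$ to be ``aligned'' with each $D_i := \{k : a_{i,k} \neq a_{1,k}\}$ in a way that should save a factor of $m^t$. Concretely, I would design a canonical length-$t$ certificate for $w$ whose entries range over only $O(m)$ options each---e.g.\ picking at each step an index $i \in [m]$ together with a single ``patch coordinate'' from a window whose size is controlled by the multi-ball distance constraints.

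\textbf{Plan for part (2).} Let $s := \free(a_1,\dots,a_m) = \free(a_1,\dots,a_m,a) \ge 0$. By the definition of $\free(a_1,\dots,a_m,a) = s$, I would fix $K \subseteq [n]$ with $|K| = s$ and $w \in X^n$ such that $\dist(\restrict{w}{K^c},\restrict{a_i}{K^c}) \le t - s$ for every $i \in [m]$ and also $\dist(\restrict{w}{K^c},\restrict{a}{K^c}) \le t-s$. The crucial observation is that the maximality of $\free(a_1,\dots,a_m) = s$ (\emph{without} the extra ball $B(a,t)$) forces $w_k \in \{a_{1,k},\dots,a_{m,k}\}$ for every $k \notin K$. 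Indeed, if some $k_0 \notin K$ had $w_{k_0} \notin \{a_{1,k_0},\dots,a_{m,k_0}\}$, then $w_{k_0} \neq a_{i,k_0}$ for every $i \in [m]$, so
\[
	\dist\bigl(\restrict{w}{(K\cup\{k_0\})^c},\restrict{a_i}{(K\cup\{k_0\})^c}\bigr) = \dist\bigl(\restrict{w}{K^c},\restrict{a_i}{K^c}\bigr) - 1 \le t - s - 1
\]
for every $i \in [m]$, and the pair $(K \cup \{k_0\}, w)$ would witness $\free(a_1,\dots,a_m) \ge s+1$, a contradiction. I would then define $u_k := w_k$ for $k \notin K$ and $u_k := a_{1,k}$ for $k \in K$; this gives $u_k \in \{a_{1,k},\dots,a_{m,k}\}$ at every coordinate, and
\[
	\dist(u,a_i) \le |K| + \dist\bigl(\restrict{w}{K^c},\restrict{a_i}{K^c}\bigr) \le s + (t-s) = t, \qquad \dist(u,a) \le |K| + \dist\bigl(\restrict{w}{K^c},\restrict{a}{K^c}\bigr) \le t,
\]
so $u \in W(a_1,\dots,a_m) \cap B(a,t)$, as required.

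\textbf{Main obstacle.} Part~(2) becomes clean once one notices that the maximizer $w$ for $\free(a_1,\dots,a_m,a)$ must already draw its values coordinate-wise from the $a_i$'s outside $K$; the remainder is a two-line triangle-inequality calculation. The real work lies in part~(1): the bound $(em)^t$ is $n$-free and allows only a factor of about $em$ per patch, whereas the straightforward ``specify $T(w)$ and one of $m-1$ values at each coordinate'' encoding loses a further factor of roughly $m^t$ on top of $(em)^t$. I expect the main technical effort to be designing the right length-$t$ certificate and amortizing the savings coming from the simultaneous distance constraints $\dist(w,a_i) \le t$ across all $i \in [m]$, perhaps by choosing the reference ball adaptively at each coordinate rather than always comparing to $a_1$.
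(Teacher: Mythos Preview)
Your argument for part~(2) is complete and is exactly the paper's proof: take the witness $(K,w)$ for $\free(a_1,\dots,a_m,a)$, observe via the maximality of $\free(a_1,\dots,a_m)$ that $w_k\in\{a_{1,k},\dots,a_{m,k}\}$ for all $k\notin K$, overwrite $w_k:=a_{1,k}$ on $K$, and check membership in $W\cap B(a,t)$.

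For part~(1) you have correctly identified the difficulty, but the key idea is simpler than the adaptive-reference scheme you sketch, and you are looking in the wrong direction. The fix is not to change the reference point away from $a_1$; it is to replace the crude per-coordinate count $m-1$ by the \emph{actual} number of options. Set $V_k:=\{a_{1,k},\dots,a_{m,k}\}$. Then any $w\in W$ with $T(w)=\{k:w_k\neq a_{1,k}\}$ has exactly $|V_k|-1$ choices for $w_k$ at each $k\in T(w)$, so
\[
|W|\ \le\ \sum_{\substack{S\subseteq[n]\\|S|\le t}}\ \prod_{k\in S}(|V_k|-1)\ \le\ \sum_{s=0}^{t}\frac{1}{s!}\Bigl(\sum_{k=1}^n(|V_k|-1)\Bigr)^{s}.
\]
The whole point is now to bound the \emph{sum} $\sum_k(|V_k|-1)$, not $|D|$ separately from the $m-1$ factor. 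Pick any $\tilde w\in W$ (we may assume $W\neq\emptyset$); for each $k$, at least $|V_k|-1$ of the indices $i\in[m]$ satisfy $a_{i,k}\neq \tilde w_k$, so the double count gives
\[
\sum_{k=1}^n(|V_k|-1)\ \le\ \sum_{i=1}^m \dist(a_i,\tilde w)\ \le\ mt.
\]
Plugging this in yields $|W|\le\sum_{s\le t}(mt)^s/s!\le 2(mt)^t/t!\le(em)^t$ by Stirling. Your naive bound fails precisely because it decouples ``how many coordinates can lie in $T(w)$'' from ``how many options there are at each such coordinate''; once you keep them together via the weights $|V_k|-1$, the single inequality $\sum_k(|V_k|-1)\le mt$ does all the work, and no adaptive encoding is needed.
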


Now, we can prove \cref{thm: fractional helly}(2) and \cref{thm: hadwiger-debrunner}.
\begin{proof}[Proof of \cref{thm: fractional helly}(2)]
    We may assume $m \ge 2t$ without loss of generality.
    An unordered tuple $(i_1,i_2,\dots,i_{t+2})\in\binom{[m]}{t+2}$ is said to be {\em good} if $B_{i_1},B_{i_2},\dots,B_{i_{t+2}}$ intersect.
    Find the largest $\ell \in [t+1]$ such that there exists (distinct) $i_1,i_2\dots,i_\ell \in [m]$ with the following properties.
    \begin{itemize}
        \item $0 \le \free(a_{i_1},a_{i_2},\dots,a_{i_j}) <\free(a_{i_1},a_{i_2},\dots,a_{i_{j-1}})$ for $2 \le j \le \ell$;
        \item there are at least $\frac{t+3-\ell}{t+2}\alpha 
		\binom{m-\ell}{t+2-\ell}$ good tuples containing $i_1,\dots,i_\ell$.
    \end{itemize}
    We first note that $\ell$ is well-defined because when $\ell=1$, the 
	pigeonhole principle implies that some $i_1 \in [m]$ lies in at least $\alpha\binom{m}{t+2}\cdot\frac{t+2}{m}=\alpha\binom{m-1}{t+1}$ good tuples.
    Now, let $I$ be the set of $i_{\ell+1} \in [m]\setminus\{i_1,i_2,\dots,i_\ell\}$ such that at least $\frac{t+2-\ell}{t+2}\alpha \binom{m-\ell-1}{t+1-\ell}$ good tuples contain $i_1,i_2,\dots,i_{\ell+1}$.
    Let $Z$ be the count of the number of good tuples containing $i_1,i_2,\dots,i_\ell$ along with one entry other than $i_1,i_2,\dots,i_\ell$. 
    It holds that 
    \begin{equation} \nonumber
        \begin{aligned}
            \frac{t+3-\ell}{t+2}\alpha \binom{m-\ell}{t+2-\ell} \cdot (t+2-\ell)
            \le Z \le&\, |I|\binom{m-\ell-1}{t+1-\ell}+(m-\ell-|I|)\frac{t+2-\ell}{t+2}\alpha \binom{m-\ell-1}{t+1-\ell}  \\
            \le&\, |I|\binom{m-\ell-1}{t+1-\ell}+\frac{t+2-\ell}{t+2}\alpha \binom{m-\ell}{t+2-\ell}(t+2-\ell),
        \end{aligned}
    \end{equation}
    implying $|I| \ge \frac{\alpha}{t+2}\binom{m-\ell}{t+2-\ell}(t+2-\ell)\big/\binom{m-\ell-1}{t+1-\ell}=\frac{\alpha}{t+2}(m-\ell)$.
    We claim that $\varphi(a_{i_1},a_{i_2},\dots,a_{i_\ell})=\varphi(a_{i_1},a_{i_2},\dots,a_{i_{\ell+1}})$ for all $i_{\ell+1} \in I$.
    Indeed, if $\ell=t+1$, then 
    $$\varphi(a_{i_1},a_{i_2},\dots,a_{i_\ell})\le \varphi(a_{i_1},a_{i_2},\dots,a_{i_{\ell-1}})-1\le\dots \le \varphi(a_{i_1})-(\ell-1)=  \varphi(a_{i_1})-t = 0,$$ so $0\le \varphi(a_{i_1},a_{i_2},\dots,a_{i_{\ell+1}})\le \varphi(a_{i_1},a_{i_2},\dots,a_{i_\ell})=0$. On the other hand, if $\ell < t+1$, then the claim holds due to the maximality of $\ell$ (otherwise $i_1,i_2,\dots,i_{t+1}$ is a longer sequence).
    Now, \cref{claim: properties of W} guarantees a set of at most $(e\ell)^t$ points in $X^n$ hitting every $B_{i_{\ell+1}}$, $i_{\ell+1} \in I$.
    By the pigeonhole principle, some point in $X^n$ hits at
	least $|I|/(e\ell)^t \ge \frac{\alpha(m-\ell)}{(t+2)(e\ell)^t}
	=\Omega(\alpha m/(e(t+1))^{t+1})$ 
	of the Hamming balls,
	(here, we used that $m \ge 2t$).
\end{proof}

\begin{proof}[Proof of \cref{thm: hadwiger-debrunner}]
    Write $A:=\{a_1,a_2,\dots,a_m\}$.
    Recall that given any $x_1,x_2,\dots,x_k\in A$, $\free(x_1,x_2,\dots,x_k)\ge 0$ if and only if $\bigcap_{i=1}^k B(x_i,t)\neq\emptyset$.
    Find the largest $\ell \ge 1$ such that there exist $x_1,x_2,\dots,x_\ell \in A$ with the following properties.
    \begin{itemize}
        \item[(i)] For every $1\le i_1<i_2<\dots<i_q \le \ell$, it holds that 
        $\bigcap_{j=1}^q B(x_{i_j},t)=\emptyset$;
        \item[(ii)] for every $2 \le k < q$ and $1 \le i_1<i_2<\dots<i_k\le\ell$ with 
        $\bigcap_{j=1}^{k-1} B(x_{i_j},t)\neq\emptyset$, it holds that         $\free(x_{i_1},x_{i_2},\dots,x_{i_{k}})<\free(x_{i_1},x_{i_2},\dots,x_{i_{k-1}})$.
    \end{itemize}
    We first note that $\ell$ is well-defined as one can take $\ell=1,x_1=a_1$.
    In addition, by our assumption, out of any $p$ Hammings balls among $B_1,B_2,\dots,B_m$, $q$ of them intersect, so $\ell \le p-1$.
    Fix any $a \in A$.
    The maximality of $\ell$ implies that $\free(x_{i_1},x_{i_2},\dots,x_{i_{q-1}},a)\ge 0$ for some $1 \le i_1<i_2<\dots<i_{q-1}\le \ell$ or $0\le \free(x_{i_1},x_{i_2},\dots,x_{i_{k}},a)=\free(x_{i_1},x_{i_2},\dots,x_{i_{k}})$
    for some $1 \le k<q-1$ and $1 \le i_1<i_2<\dots<i_k\le \ell$.
    As a consequence, one of the following must hold.
    \begin{itemize}
        \item[(1)] $0\le \varphi(x_{i_1},x_{i_2},\dots,x_{i_k},a)=\varphi(x_{i_1},x_{i_2},\dots,x_{i_k})$ for some $1 \le k < q,1 \le i_1<i_2<\dots<i_k\le\ell$;
        \item[(2)] $0 \le \varphi(x_{i_1},x_{i_2},\dots,x_{i_{q-1}},a)<\varphi(x_{i_1},x_{i_2},\dots,x_{i_{q-1}})$ for some $1 \le i_1<i_2<\dots<i_{q-1}\le\ell$.
    \end{itemize}
    
    We deal with $a \in A$ satisfying (1) and (2) separately.
    For every $\emptyset\neq J \subseteq [\ell]$ of size at most $q-1$, let $W_J:=W(x_{i_j}: j \in J)$ (see \cref{def: candidates given by free dim}).
    By \cref{claim: properties of W}, $|W_J| \le (e|J|)^t\le (\big(e(q-1)\big)^t$ and $W_J$ hits $B(a,t)$ whenever $a \in A$ fulfills (1) with $J=\{i_1,i_2,\dots,i_k\}$.
    Taking the union of all these $W_J$s, $W:=\bigcup_{J} W_J$ satisfies $|W| \le \binom{p-1}{<q}\big(e(q-1)\big)^t\le \binom{p}{<q}\big(e(q-1)\big)^t$ and $W$ hits $B(a,t)$ whenever $a\in A$ fulfills (1).
    
    Now, suppose $J=\{i_1<i_2<\dots<i_{q-1}\}\subseteq [\ell]$.
    Consider $A_J$, the set of all $a \in A$ satisfying (2) with $i_1,i_2,\dots,i_{q-1}$.
    We will propose a set $Y_J\subseteq X^n$ that hits every $B(a,t)$, $a \in A_J$.
    To this end, we may assume $\varphi(x_{i_1},x_{i_2},\dots,x_{i_{q-1}})\ge0$ as otherwise $A_J=\emptyset$.
    We also need the following estimate.
    \begin{claim}\label{claim: small distance to candidates}
        For every $a \in A$,
        $\dist(a,W_J):=\min_{w \in W_J}\dist(a,w) \le 2t+2-q$.
    \end{claim}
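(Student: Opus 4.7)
The plan is to exhibit an explicit $w\in W_J$ with $\dist(a,w)\le 2t+2-q$.

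First I would bound the free dimension
$$s:=\varphi(x_{i_1},\dots,x_{i_{q-1}})\le t+2-q$$
by iterating condition (ii) of the $\ell$-sequence: starting from $\varphi(x_{i_1})=t$, each of the $q-2$ additions $x_{i_2},\dots,x_{i_{q-1}}$ strictly decreases $\varphi$ (every prefix intersection is nonempty because we have assumed $s\ge 0$). Then I would fix a witness $(w_0,K)$ for $s$: $|K|=s$, $\dist(w_0|_{K^c},x_{i_j}|_{K^c})\le t-s$ for every $j$, and (by the remark after \cref{def: free dim}) $w_{0,k}\in S_k:=\{x_{i_j,k}:j\in[q-1]\}$ for every $k\in K^c$.

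Next I would construct the candidate $w\in W_J$ from $w_0$: keep $w_k=w_{0,k}$ on $K^c$, and on $K$ pick $w_k\in S_k$ so that $w_k=a_k$ whenever $a_k\in S_k$. The witness inequalities then give $\dist(w,x_{i_j})\le(t-s)+s=t$, so $w\in W_J$, and the $K$-contribution to $\dist(a,w)$ is at most $|K|=s\le t+2-q$. It therefore suffices to bound $\dist(a|_{K^c},w_0|_{K^c})\le t$, since then $\dist(a,w)\le t+s\le 2t+2-q$.

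For this last bound I would exploit the maximality of $\ell$, which at the start of the present proof forces every $a\in A$ into case (1) or case (2); both produce a \emph{joint} witness $(w^*,K^*)$ for $\varphi(\cdots,a)\ge 0$ that approximates $a$ to within $t-|K^*|$ on $K^{*c}$. The plan is to substitute a small modification of $w^*$ for $w_0$: the "exceptional" coordinates where $w^*_k$ lies in the enlarged allowed set $S_k\cup\{a_k\}$ but not in $S_k$ (i.e.\ $w^*_k=a_k\notin S_k$) are precisely those that contribute $1$ to \emph{every} $\dist(w^*|_{K^{*c}},x_{i_j}|_{K^{*c}})$, hence their count is at most $t-|K^*|$, and the corrections needed to move them into $S_k$ fit inside the slack provided by the witness inequalities.

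The main obstacle is that the sub-sequence furnished by the maximality of $\ell$ need not equal the fixed $J$ in the claim; the coupling of a witness for this sub-sequence with a witness for $J$ has to be threaded through the chain of strictly decreasing $\varphi$'s along the $\ell$-sequence, using the prefix witnesses for $\varphi(x_{i_1},\ldots,x_{i_j})$ for $j=2,\dots,q-1$. The delicate bookkeeping is that the modification cost $t-|K^*|$, the witness slack $t-s$, and the inequality $s\le t+2-q$ must combine to give exactly $2t+2-q$, with no looseness at any step.
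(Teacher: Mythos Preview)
Your proposal has a genuine gap, and it stems from taking the claim's quantifier too literally. The claim is stated ``for every $a\in A$,'' but it is only applied to $a\in A_J$, and the paper's proof tacitly uses this: its first line is ``Since $\varphi(x_{i_1},\dots,x_{i_{q-1}},a)\ge 0$,'' which is precisely condition~(2) for this fixed $J$. With that hypothesis in hand, the argument is a two-line triangle inequality: pick any $w\in B(a,t)\cap\bigcap_j B(x_{i_j},t)$, let $K=\{k:w_k\notin S_k\}$, observe that $(w,K)$ witnesses $|K|\le \varphi(x_{i_1},\dots,x_{i_{q-1}})\le t+2-q$ (your step~1), and overwrite $w$ on $K$ by $x_{i_1}$ to land in $W_J$; then $\dist(a,w')\le \dist(a,w)+|K|\le t+(t+2-q)$.

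Your route instead fixes a witness $(w_0,K)$ for $\varphi(x_{i_1},\dots,x_{i_{q-1}})$ that has nothing to do with $a$, and then tries to prove $\dist(a|_{K^c},w_0|_{K^c})\le t$. This is where things break: there is no reason such a bound should hold for an arbitrary maximiser $w_0$. You then propose to swap $w_0$ for a modification of a joint witness $w^*$ coming from the (1)/(2) dichotomy, but you correctly note that the sub-sequence produced by maximality of $\ell$ need not be $J$, and the ``threading through the chain of prefixes'' you sketch is not a proof---there is no mechanism connecting a witness for a different $J'$ to the fixed $W_J$. For $a\in A_J$ you \emph{do} have $w^*$ for $J$ itself, and then your ``exceptional coordinate'' idea is exactly the paper's argument in disguise (the exceptional set together with $K^*$ is the paper's $K$). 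So the fix is simply to start from the joint witness for $J$ and $a$ that condition~(2) hands you; the detour through $(w_0,K)$ and the (1)/(2) dichotomy for other sub-sequences is both unnecessary and the source of the obstacle you flagged.
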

    \begin{proof}
        Since $\varphi(x_{i_1},x_{i_2},\dots,x_{i_{q-1}},a) \ge 0$, there exists $w\in B(a,t)\cap \bigcap_{j=1}^{q-1} B(x_{i_j},t)$, i.e. $\dist(w,a) \le t$ and $\dist(w,x_{i_j})\le t$ for all $j\in[q-1]$.
        Let $K$ be the set of $k\in[n]$ such that $w_k\notin\{x_{i_1,k},x_{i_2,k}\dots,x_{i_{q-1},k}\}$.
        Then, $\dist(\restrict{w}{K^c},\restrict{x_{i_j}}{K^c})+|K|=\dist(w,x_{i_j})\le t$ for all $j \in [q-1]$.
        Using (ii) and \cref{def: free dim}, we acquire $$|K| \le \varphi(x_{i_1},x_{i_2},\dots,x_{i_{q-1}})\le \varphi(x_{i_1},x_{i_2},\dots,x_{i_{q-2}})-1\le\dots \le t+2-q.$$ 
        Now, pick $w' \in X^n$ where $w'_k=w_k$ for $k \in K^c$ and $w'_k=x_{i_1,k}$ for $k \in K$.
        It satisfies that $w'_k\in\{x_{i_1,k},x_{i_2,k},\dots,x_{i_{q-1},k}\}$ for all $k \in [n]$ and $\dist(w',x_{i_j})\le\dist(\restrict{w'}{K^c},\restrict{x_{i_j}}{K^c}) + |K| =\dist(\restrict{w}{K^c},\restrict{x_{i_j}}{K^c}) + |K|=\dist(w,x_{i_j})\le t$ for all $j\in[q-1]$.
        In other words, $w'\in\bigcap_{j=1}^{q-1} B(x_{i_j},t)$ and hence, $w' \in W_J$.
        So, $\dist(a,W_J)\le \dist(a,w') \le \dist(a,w)+\dist(w,w')\le t+|K|\le 2t+2-q$.
    \end{proof}
    Next, we generate $x_{J,1},x_{J,2},\dots,x_{J,k_J} \in A_J$ as follows.
    Pick any $x_{J,1} \in A_J$; having picked $x_{J,1},x_{J,2},\dots,x_{J,k}$ for some $k \ge 1$, if there exists $a \in A_J$ with $\dist(a,x_{J,i})>2t$ for all $i\in[k]$, pick $x_{J,k+1}$ to be such $a$ that maximizes $\dist(a,W_J)$.
    Clearly, among $B(x_{J,1},t),B(x_{J,2},t),\dots,B(x_{J,k_J},t)$, no two balls intersect, so $k_J < p$.
    For every $w \in W_J$ and every $1 \le k \le k_J$, let $Y_{w,k}$ be the set of points given by \cref{lemma: covering by distance} (plugging $a:=w$, $b:=x_{J,k}$ and $\delta=q-2$); so $|Y_{w,k}|\le \binom{4t+2-q}{t+2-q}|X|^{t+2-q}$.
    We claim that $Y_J:=\bigcup_{w \in W_J,k\in[k_J]}Y_{w,k}$ hits every $B(a,t)$, $a \in A_J$.
    To this end, fix an arbitrary $a \in A_J$.
    According to the generation of $x_{J,1},x_{J,2},\dots,x_{J,k_J}$, there exists $k_a \in [k_J]$ such that $\dist(a,x_{J,k_a})\le 2t$.
    We may take the minimum such $k_a$.
    Thus,$\dist(a,x_{J,k})> 2t$ for all $1 \le k < k_a$.
    But then, the procedure (in step $k_a$) also implies $\dist(a,W_J)\le \dist(x_{J,k_a},W_J)$.
    Taking $w \in W_J$ such that $\dist(a,W_J)=\dist(a,w)$, we acquire $\dist(a,w)=\dist(a,W_J)\le\dist(x_{J,k_a},W_J)\le\dist(x_{J,k_a},w)$.
    Recall from \cref{claim: small distance to candidates} that $\dist(a,w)=\dist(a,W_J) \le 2t+2-q$.
    Altogether, $\dist(w,a)\le\min(\dist(w,x_{J,k_a}),2t+2-q)$ and $\dist(a,x_{J,k_a}) \le 2t$.
    By \cref{lemma: covering by distance}, $Y_{w,t}$ hits $B(a,t)$ and hence, $Y_J$ hits $B(a,t)$.
    In addition, 
    $$
        |Y_J| \le \sum_{w\in W_J,k\in[k_J]} |Y_{w,k}|< |W_J|k_J\binom{4t+2-q}{t+2-q}|X|^{t+2-q}\le (eq)^t p2^{O(t)}|X|^{t+2-q}
        \le q^tp2^{O(t)}|X|^{t+2-q}.
    $$

    To complete the proof, we consider two cases.
   If $q=t+2$, note that all $a \in A$ satisfy (1). 
Indeed, $a \in A$ satisfying (2) is not possible, since then (2) and (ii) 
imply
\begin{eqnarray*}
   0 &\le& \varphi(x_{i_1},x_{i_2},\dots,x_{i_{q-1}},a)\le \varphi(x_{i_1},x_{i_2},\dots,x_{i_{q-1}})-1\le \varphi(x_{i_1},x_{i_2},\dots,x_{i_{q-2}})-2\le \cdots\\
   &\le& 
  \varphi(x_{i_1})-(q-1)=t-(q-1)=-1. 
\end{eqnarray*}
In other words, $a_1,a_2,\dots,a_m$ all satisfy (1).
By the former discussion, $W$ hits every $B_i$, $i\in[m]$, where 
$$
    |W| 
    \le \binom{p}{\le t+1}(e(t+1))^t 
    = O\!\left(\left(\frac{ep}{(t+1)}\right)^{t+1} (e(t+1))^t \right) 
    = O(e^{2t}p^{t+1}).
$$

If $2 \le q \le t+1$. Either $a \in A$ satisfies (1), so $W$ hits $B(a,t)$, or $a \in A$ satisfies (2), so $Y_J$ hits $B(a,t)$ for some $J \subseteq [\ell]$ of size $q-1$.
Thus, $Y:=W \cup \bigcup_{J} Y_J$ is the desired set, whose size 
$$
    |Y| 
    \le \binom{p}{<q}\big(e(q-1)\big)^t+\binom{p}{q-1}q^tp2^{O(t)}|X|^{t+2-q}
    =p^qq^t|X|^{t+2-q}2^{O(t)}.
$$
\end{proof}

\begin{proof}[Proof of \cref{lemma: covering by distance}]
    Write $P:=\{p\in X^n: \dist(a,p)\le \min(\dist(a,b),2t-\delta), \dist(b,{p}) \le 2t\}$.
    We may assume that $P\neq \emptyset$ and that $\dist(a,b) > t$ as otherwise every $B(p,t)$, $p \in P$, contains $a$.
    By taking any $p\in P$, we know $\dist(a,b)\le \dist(a,{p})+\dist({p},b)\le 4t-\delta$.
    Write $D:=\{k\in[n]:a_k\neq b_k\}$, so $|D|=\dist(a,b)\le 4t-\delta$.
    Let $Y$ be the set of all $y\in X^n$ such that $\{k: y_k\neq a_k\}$ is a subset of $D$ of size at most $t-\delta$; $|Y| \le \binom{4t-\delta}{t-\delta} |X|^{t-\delta}$.
    We prove that $Y$ has the desired property.
    To this end, fix any $p \in P$, and we will find some $y \in Y$ hitting $B(p,t)$.
    Writing $D_p := \{k\in[n]: a_k \neq p_k\}$, it holds that $|D_p|=\dist(a,p)\le \min(|D|,2t-\delta)\le |D|$, thereby $|D { \triangle}D_p| \ge 2|D_p\setminus D|$.
    Now, observe that $b_k \neq p_k$ for all $k \in D\triangle D_p$.
    We then acquire $2|D_p\setminus D|\le |D\triangle D_p|\le\dist(b,p)\le 2t$, i.e. $|D_p\setminus D|\le t$.
    Now, take any $I\subseteq D_p\cap D$ of size $\min(|D_p\cap D|, t-\delta))$ and $y \in X^n$ such that $y_k=a_k$ for all $k \in [n]\setminus I$ and $y_k=p_k$ for all $k \in I$.
    We know $y \in Y$ (because $|I|\le t-\delta$) and $\dist(y,p)=|D_p\setminus I| \le t$. Indeed, if $|I|=t-\delta$ it holds because $|D_p|\leq 2t-\delta$, otherwise $|I|=|D_p\cap D|$ and hence $|D_p\setminus I|=|D_p\setminus D| \leq t$.
    In other words, $y\in Y$ hits $B(p,t)$, completing the proof.
\end{proof}
\begin{proof}[Proof of \cref{claim: properties of W}]
    Write $W:=W(a_1,a_2,\dots,a_m)$.
    For (1), we may assume $W \neq\emptyset$ and $m \ge 2$ because $W=\{a_1\}$ when $m=1$.
    For each $k\in[n]$, denote $V_k:=\{a_{1,k},a_{2,k},\dots,a_{m,k}\}$.
    Consider $\sum_{i=1}^m \dist(a_i,\tilde{w})$ for an arbitrary $\tilde{w} \in W$, which counts pairs $(i,k) \in [m]\times [n]$ where $a_{i,k}\neq\tilde{w}_k$.
    For each $k \in [n]$, there are at least $|V_k|-1$ indices $i\in[m]$ with $a_{i,k}\neq w_k$, so $ \sum_{k=1}^n (|V_k|-1) \le \sum_{i=1}^m \dist(a_i,\tilde{w})\le mt$.
    Now, observe that any $w \in W$ has that $w_k\neq a_{1,k}$ for at most $t$ of $k\in[n]$.
    Thus, we can enumerate over set of these indices $k$, which we denote by $S\subseteq [n]$, and for each such $k \in S$, there are $|V_k|-1$ choices for $w_k$, i.e. 
    \begin{equation} \nonumber
        \begin{aligned}
            |W| 
            \le \sum_{S \subseteq [n], |S|\le t} \prod_{k\in S} (|V_k|-1)
            \le \sum_{s=0}^t \frac{1}{s!}\!\left(\sum_{k=1}^n (|V_k|-1)\right)^s 
            \le \sum_{s=0}^t \frac{(mt)^s}{s!}
            \le 2\frac{(mt)^t}{t!}
            \le (em)^t.
        \end{aligned}
    \end{equation}
    Here, we used the Stirling's approximation $t!\ge 2(t/e)^t$ for $t \ge 1$.

    For (2), let $K\subseteq [n],w \in X^n$ be the set and vector 
	in the definition of $\free(a_1,a_2,\dots,a_{m},a)$.
    Putting $w_k=a_{1,k}$ for all $k \in K$, we have $\dist(a,w)\le t$ and $\dist(a_i,w) \le t$ for all $i\in [m]$.
    It suffices to show that $w \in W$.
    Suppose not, i.e. $w_k\notin\{a_{1,k},a_{2,k},\dots,a_{m,k}\}$ for some $k\in[n]$.
    Clearly, $k \notin K$.
    Putting $L:=K\cup \{k\}$, it holds that 
    $\dist(\restrict{w}{L^c},\restrict{a_i}{L^c}) + |L|
        = \dist(\restrict{w}{K^c},\restrict{a_i}{K^c}) - 1 + |K| + 1
        \le t$ for all $1 \le i\le m$.
    This means $\free(a_1,a_2,\dots,a_m) \ge |L| > \free(a_1,a_2,\dots,a_{m},a)$, contradicting our assumption.
    Thus, $w \in W\cap B(a,t)$, as desired.
\end{proof}

\subsection{Sequences of sets}
One way to generalize \cref{thm: main result for f} is to consider sequences of sets.
More precisely, given $n > t \ge 0$, $a,b \ge 1$ and a set $X$ with $|X| \ge a+b$, an {\em $(n,t,a,b,X)$-system} is a collection of pairs $({A}_i,{B}_i)_{i\in[m]}$ (for some $m$) such that for each $i$, ${A}_i=(A_{i,1},A_{i_2},\dots,A_{i,n})$ (similarly, ${B}_i=(B_{i,1},B_{i_2},\dots,B_{i,n})$) where each $A_{i,k}$ is a subset of $X$ of size at most $a$ (similarly, each $B_{i,k}$ is a subset of $X$ of size at most $b$).
Define the distance $\dist(A_i,B_j)$ to be the number of $k \in [n]$ such that $A_{i,k}\cap B_{j,k}=\emptyset$. 
Then, we can extend $f(t;X)$ by denoting $f(n,t,a,b;X)$ to be the size of the largest $(n,t,a,b,X)$-system such that $\dist(A_i,{B}_j) \ge t+1$ if and only if $i=j$.
One can check that \cref{thm: general theorem for f} corresponds to the case $a=b=1$ by replacing each entry of $a_i$s and $b_i$s by a singleton containing it.
The first author~\cite{Alon85} proved that $f(t+1,t,a,b;X)= \binom{a+b}{a}^{t+1}$ and \cref{thm: main result for f} implies $f(n,t,1,1;X) = 2^{t+1}$.
One natural guess might be that $f(n,t,a,b;X)=\binom{a+b}{a}^{t+1}$ for all $n > t \ge 0$. 
In particular, this would mean that $f(n,t,a,b;X)$ is independent of $n$.
However, this is not the case whenever $a>1$ or $b>1$.
\begin{proposition} \label{prop: sequence of sets}
    $\binom{n}{t+1} \left(\binom{a+b}{b}-2\right)^{t+1}\!\!\!\!\le f(n,t,a,b;X) \le \binom{n}{t+1} \binom{a+b}{b}^{t+1}$ if $n>t \ge 0$ and $|X| \ge a+b$.
\end{proposition}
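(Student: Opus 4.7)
I will prove both inequalities separately. The upper bound will follow from a ``witness set'' reduction to the first author's earlier result for the case $n=t+1$. The lower bound will come from an explicit construction that exploits a carefully chosen pair of ``default'' entries outside the witness set.

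\textbf{Upper bound.} Since $\dist(A_i,B_i)\ge t+1$, for each pair $(A_i,B_i)$ I will pick a witness $S_i\in\binom{[n]}{t+1}$ on which $A_{i,k}\cap B_{i,k}=\emptyset$ at every $k\in S_i$, and then group the pairs by $S_i=S$. Within each class, the restrictions $(A_i|_S,B_i|_S)$ form an $(t+1,t,a,b,X)$-system: the self-distance on $S$ equals $|S|=t+1$, while for $i\ne j$ the cross-distance on $S$ is at most the global $\dist(A_i,B_j)\le t$ (there are at most $t$ empty coordinates globally and $|S|=t+1$). These restrictions must be pairwise distinct within each class, for otherwise the cross-distance on $S$ would be $t+1$, violating the hypothesis. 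Applying the equality $f(t+1,t,a,b;X)=\binom{a+b}{a}^{t+1}$ inside each class and summing over the $\binom{n}{t+1}$ choices of $S$ yields the bound.

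\textbf{Lower bound.} Fix $Y\subseteq X$ with $|Y|=a+b$ and choose a size-$a$ subset $T_A\subseteq Y$ together with a size-$b$ subset $T_B\subseteq Y$ with $T_A\cap T_B\ne\emptyset$; note this forces $T_A\ne Y\setminus T_B$. Set $\mc{F}:=\binom{Y}{a}\setminus\{T_A,Y\setminus T_B\}$, so $|\mc{F}|=\binom{a+b}{a}-2$. For each $S\in\binom{[n]}{t+1}$ and each $\phi\in\mc{F}^S$ I will take the pair
\[
A^{S,\phi}_k=\phi_k,\ B^{S,\phi}_k=Y\setminus\phi_k \ \ (k\in S),\qquad A^{S,\phi}_k=T_A,\ B^{S,\phi}_k=T_B \ \ (k\notin S).
\]
The self-distance equals $t+1$ since $A^{S,\phi}_k\cap B^{S,\phi}_k=\emptyset$ exactly on $S$. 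For two distinct pairs $(S,\phi)\ne (S',\phi')$ I will compute the cross-distance coordinate-by-coordinate: for $k\notin S\cup S'$ the intersection is $T_A\cap T_B\ne\emptyset$; for $k\in S\setminus S'$ it is $\phi_k\cap T_B$, nonempty because $\phi_k\ne Y\setminus T_B$; for $k\in S'\setminus S$ it is $T_A\setminus\phi'_k$, nonempty because $\phi'_k\ne T_A$; for $k\in S\cap S'$ it is empty iff $\phi_k=\phi'_k$. Hence the cross-distance equals the number of matches on $S\cap S'$, which is at most $t$ in both remaining cases ($S=S'$ with $\phi\ne\phi'$ gives at most $|S|-1=t$ matches, while $S\ne S'$ gives $|S\cap S'|\le t$). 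Distinctness of the constructed pairs is read off the same case analysis (any $k\in S\triangle S'$ separates them via $\phi_k\ne T_A$). Counting yields $\binom{n}{t+1}(\binom{a+b}{a}-2)^{t+1}$ pairs.

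\textbf{Main obstacle.} The creative step is pairing the intersecting defaults $(T_A,T_B)$ with the exclusion set $\{T_A,Y\setminus T_B\}\subseteq\binom{Y}{a}$: this precise matching is what simultaneously kills all contributions from coordinates in $S\triangle S'$ and from those outside $S\cup S'$, leaving only the controllable matches on $S\cap S'$. With naive defaults (say, a common singleton $\{y_0\}$ for $k\notin S$) one side of the symmetric difference $S\triangle S'$ always contributes, so the cross-distance inflates by $|S\triangle S'|$ and can exceed $t$ whenever $S$ and $S'$ are nearly disjoint; identifying the balanced pair of exclusions is the one non-mechanical part of the argument.
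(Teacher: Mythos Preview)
Your proof is correct. The lower-bound construction is essentially identical to the paper's: in the paper's language your defaults $(T_A,T_B)$ are $(S_{\ell-1},T_\ell)$ and your excluded family $\mathcal{F}$ corresponds to the labels $[\ell-2]$, so the two constructions are reparametrizations of one another.

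The upper bound differs in presentation. The paper samples a uniformly random $(t+1)$-set $S\subseteq[n]$, restricts every pair to $S$, lets $I$ be the set of indices whose self-distance survives (i.e.\ is still $t+1$), observes that $|I|\le f(t+1,t,a,b;X)=\binom{a+b}{a}^{t+1}$ deterministically and $\Pr[i\in I]\ge 1/\binom{n}{t+1}$, and concludes via $\mathbb{E}|I|$. You instead fix one witness set $S_i$ per pair and pigeonhole over the $\binom{n}{t+1}$ possible values of $S_i$, applying $f(t+1,t,a,b;X)=\binom{a+b}{a}^{t+1}$ inside each class. The two arguments are equivalent in strength here; your deterministic version is perhaps slightly cleaner since it avoids the (trivial) expectation step, while the paper's sampling formulation generalizes more readily if one wanted weighted or averaged versions. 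Either way, the key input is the same $n=t+1$ result.
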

\begin{proof}
    For the upper bound, suppose $({A}_i,{B}_i)_{i\in [m]}$ is an 
	$(n,t,a,b,X)$-system realizing $f(n,t,a,b;X)$.
    Uniformly sample a subset $S \subseteq [n]$ of size $t+1$ and consider the following $(t+1,t,a,b,X)$-system: for each $i \in [m]$, ${A}'_i:=(A_{i,k})_{k\in S}$ and ${B}'_i:=(B_{i,k})_{k \in S}$.
    Clearly, $\dist({A}'_i,{B}'_j) \le \dist({A}_i,{B}_j) \le t$ for every distinct $i,j\in I$.
    Let $I$ be the set of $i \in [m]$ where $\dist({A}'_i,{B}'_i) \ge t+1$.
    We know $|I| \le f(t+1,t,a,b;X)=\binom{a+b}{a}^{t+1}$.
    Also, using that $\Pr[i \in I] \ge 1/\binom{n}{t+1}$, we conclude that
	${m}/{\binom{n}{t+1}} \le \EE|I| \le \binom{a+b}{a}^{t+1}$, i.e. $f(n,t,a,b;X)=m \le \binom{n}{t+1} \binom{a+b}{b}^{t+1}$.

    For the lower bound, we may assume $X=[a+b]$.
    Let $S_1,\dots,S_\ell$ be an arbitrary enumeration of all subsets of $X$ of size $a$ (so $\ell=\binom{a+b}{a}$), and for each $i\in[\ell]$, let $T_i:= X\setminus S_i$.
    Observe that $S_i\cap T_j=\emptyset$ if and only if $i=j$.
    Define a mapping $\varphi:[\ell-1]\to [\ell]$ by putting $\varphi(\ell-1)=\ell$ and $\varphi(i)=i$ for all $i\in[\ell-2]$.
    Now, let $a_1,\dots,a_m$ be any enumeration of the sequences in $[\ell-1]^n$, exactly $n-t-1$ entries of which are $\ell-1$.
    Then, $m=\binom{n}{t+1}(\binom{a+b}{a}-2)^{t+1}$.
    For each $i\in[m]$, define ${A}_i=(A_{i,k})_{k\in[n]}$ where $A_{i,k}=S_{a_{i,k}}$ and ${B}_i=(B_{i,k})_{k\in[n]}$ where $B_{i,k}=T_{\varphi(a_{i,k})}$.
    Since $({A}_i,{B}_i)_{i\in[m]}$ is a $(n,t,a,b,X)$-system, it suffices to check that $\dist({A}_i,{B}_j) \ge t+1$ if and only if $i=j$.
    For any $i,j\in[m]$ and $k\in[n]$, it holds that $A_{i,k}\cap B_{j,k}=\emptyset \Leftrightarrow S_{a_{i,k}} \cap T_{\varphi(a_{j,k})} = \emptyset \Leftrightarrow a_{i,k}=a_{j,k} \in [\ell-2]$.
    Thus, 
    $$
        \dist({A}_i,{B}_j)
        = \abs{\{k: A_{i,k}\cap B_{j,k}=\emptyset \}}
        = \abs{\{k: a_{i,k}=a_{j,k} \in [\ell-2]\}}
        \le \abs{\{k: a_{i,k}\in [\ell-2]\}} = t+1.
    $$
    Clearly, if $i=j$, then $\dist(\mc{A}_i,\mc{B}_i) = t+1$.
    On the other hand, if $\dist(\mc{A}_i,\mc{B}_j) \ge t+1$, it must be that $a_{j,k}=a_{i,k}$ for all $k$ with $a_{i,k}\in[\ell-2]$.
    This shows $i=j$.
\end{proof}
Notably, when $b = 1$ and $|X|=a+1$, $f(n,t,a,1;X)$ is equal to the maximum $m$ such that there exist $a_1,a_2,\dots,a_m,b_1,b_2,\dots,b_m\in X^n$ where $\dist(a_i,b_i)\le n-t-1$ for all $i$ and $\dist(a_i,b_j) \ge n-t$ for all $i\neq j$.
Indeed, given $A_{i,k}$ with $|A_{i,k}|=a$ and $B_{i,k}$ with $|B_{i,k}|=1$, for each $1 \le i \le m$, we can define $a_i=(a_{i,k})_k$ where $a_{i,k}$ is the only element in $X\setminus A_{i,k}$, and $b_i=(b_{i,k})_k$ where $b_{i,k}$ is the only element in $B_{i,k}$.
Then, $\dist({A}_i,B_j)=n-\dist(a_i,b_j)$.
When $a=1$, the answer is the same as $f(n,t;\{0,1\})=2^{t+1}$ by flipping all the $b_i$s.
But when $a > 1$, \cref{prop: sequence of sets} shows that the largest such family has size $\binom{n}{t+1}(a-O(1))^{t+1}$.

\subsection{Connection to the Prague dimension} \label{sec: prague dimension}
Given a graph $G$, the {\em Prague dimension}, $\pd(G)$, is the minimum $d$ such that one can assign each vertex a unique vector in $\mb{Z}^d$ and two vertices are adjacent in $G$ if and only if the two corresponding vectors differ in all coordinates.
In other words, $\pd(G)$ is the minimum $d$ such that there exists some injection $f:V(G) \to \mb{Z}^d$ so that $u, v$ are adjacent in $G$ if and only if $\dist(f(u),f(v))=d$.

The definition and results of the function
$f(n,t,X)$ suggest the following
variant of the Prague dimension.
Given a graph $G$, the {\em threshold Prague dimension}, $\tpd(G)$, is the minimum $t$ such that there exists some $d \in \mb{N}$ and some $f:V(G) \to \mb{Z}^d$ so that $u,v$ are adjacent in $G$ if and only if $f(u)$ and $f(v)$ differ in at least $t$ coordinates, that is, 
$\dist(f(u),f(v)) \ge t$.
By definition, $\tpd(G) \le \pd(G)$.
In this section, we list and compare some properties of 
these two dimensions.

First, $\tpd(G(n,1/2))=\Theta(n/\log n)$ with high probability.
The upper bound holds as $\pd(G(n,1/2))=\Theta(n/\log n)$ with high probability by~\cite{GPW23}.
For the lower bound, let $\mc{G}$ be the set of all graphs with vertex set $[n]$ whose complement has diameter 2.
It is well known that $|\mc{G}|=(1-o(1))2^{\binom{n}{2}}$ (see for example \cite{Bollobas01}).
We will compare the number of ``essentially distinct'' mappings $f:[n]\to\mb{Z}^d$ (that realize $\tpd(G)$ for some $G\in\mc{G}$) to $|\mc{G}|$.
Given any graph $G \in \mc{G}$ and any $f:V(G)\to \mb{Z}^d$ that realizes $t:=\tpd(G)$, without loss of generality, we can assume that 
$f(u) \in [n]^d$ for every vertex $u$ and that 
$f(1)$ is the all-ones string.
Knowing that $\dist(f(u),f(v)) < t$ for $u,v$ not adjacent in 
$G$ and that the diameter of the complement of $G$ is 2, it follows straightforwardly that $\dist(f(1),f(u)) < 2t$ for all $u \in V(G)$.
Define $I_u:=\{k\in[d]: f(1)_k\neq f(u)_k\}$ for each $u \in [n]$ and $I:=\bigcup_{u\in [n]} I_u$.
We have $|I_u| < 2t$ and hence, $|I|<2tn$.
Then, $f(u)_k=f(1)_k=1$ for every $u \in [n]$ and $k \in [d]\setminus I$.
Thus, we can assume $d = 2tn$ without loss of generality.
By the former discussion, we can specify any graph $G \in \mc{G}$ by a spanning tree $T$ in its complement and the corresponding lists of 
length $d=2tn$ so that $f(1)$ is all-ones, $f(u)\in[n]^{2tn}$ for all $u\in[n]$ and $\dist(f(u),f(v))<t$ for all $u,v \in E(T)$.
If $u$ is the parent of $v$ in $T$ and $f(u)$ has been fixed, there are at most $\binom{2tn}{<t}n^t$ choices for $f(v)$.
Thus, the number of such functions $f$ is at most 
$n^{n-2} (\binom{2tn}{<t} n^t)^{n-1}= n^{O(tn)}$ (here $n^{n-2}$ is the number of spanning trees in $K_n$).
If $\tpd(G(n,1/2)) \le t$ with high probability, then $n^{O(tn)}>|\mc{G}|-o(2^{\binom{n}{2}})=(1-o(1))2^{\binom{n}{2}}$, i.e. $t = \Omega(n/\log n)$.

Second, if $u_1,u_2,\dots,u_s$ and $v_1,v_2,\dots,v_s$ are two 
sequences of vertices in $G$ such that $u_i,v_j$ are adjacent 
in $G$ if and only if $i=j$, i.e., the edges $(u_i,v_i)$ form an 
induced matching in $G$, then $\tpd(G) \ge \log_2 s$ by \cref{thm: main result for f}. 
Indeed, any $f:V(G)\to \mb{Z}^d$ realizing $\tpd(G)$ satisfies $\dist(f(u_i),f(v_i)) \ge \tpd(G)$ for all $i\in[s]$ and $\dist(f(u_i),f(v_j))<\tpd(G)$ for all distinct $i,j\in[s]$.
This argument has been widely used to give lower bounds for $\pd(G)$ for various graphs $G$.
For example, let us consider graphs on $n$ vertices such that the minimum degree is at least one while the maximum degree is $\Delta$.
This includes a lot of basic graphs like perfect matchings, cycles, paths, etc.
The first author~\cite{Alon86} showed that the Prague dimension for these graphs is at least $\log_2 \frac{n}{\Delta}-2$ because they contain an induced matching of size at least $\frac{n}{4\Delta}$.
Now, \cref{thm: main result for f} shows the same bound also holds for the threshold Prague dimension.
To compare, we note that Eaton and R\"odl~\cite{ER96} showed that the Prague dimension (and thus the threshold Prague dimension) for these graphs is at most $O(\Delta\log_2 n)$.

Third, the threshold Prague dimension can be much smaller than the Prague dimension.
For example, it is known that $\pd(K_n+K_1)=n$ (see~\cite{LNP80}), 
where $K_n+K_1$ is the vertex disjoint union of 
a clique of size $n$ and an isolated vertex.
However, by mapping the vertices of $K_n$ to the standard orthonormal basis of $\mc{R}^n$ and that of $K_1$ to the all-zeros vector, 
we observe that $\tpd(K_n+K_1)\le 2$.
A more interesting example is the Kneser graph: for $n \ge k$, the Kneser graph $K(n,k)$ is the graph whose vertices are all the $k$-element subsets
of $[n]$ and whose edges are pairs of disjoint subsets.
When $1 \le k \le n/2$, it is known that $\log_2\log_2\frac{n}{k-1} \le \pd(K(n,k)) \le C_k\log_2\log_2 n$ for some constant $C_k$; see \cite{Furedi00}.
For the threshold Prague dimension, define $f:\binom{[n]}{k}\to\{0,1\}^n$ by mapping each vertex in $K(n,k)$ to the indicator vector of length $n$ of the corresponding subset.
Then, for two adjacent vertices $u,v$, (where the corresponding 
two subsets are disjoint), $\dist(f(u),f(v))=2k$. For two 
non-adjacent vertices $u,v$, the two subsets intersect and $\dist(f(u),f(v)) \le 2(k-1)<2k-1$.
This shows $\tpd(K(n,k)) \le 2k-1$.
In addition, $K(2k,k)$ is an induced matching of size $\frac{1}{2}\binom{2k}{k}$, so $\tpd(K(2k,k))\ge \log_2\frac{1}{2}\binom{2k}{k}=2k-O(\log_2 k)$.
Knowing that $K(n,k)$ contains $K(2k,k)$ as an induced subgraph, we have 
$\tpd(K(n,k)) \ge \tpd(K(2k,k))=2k-O(\log_2 k)$.
Thus, $\tpd(K(n,k))$ is asymptotically $2k$. This holds independently of $n$, very different from the behavior of $\pd(K(n,k))$.

Finally, it would be also interesting to determine the maximum possible threshold Prague dimension for an $n$-vertex graph $G$.  For the ordinary Prague dimension, this was done by Lov\'{a}sz, Ne\v{s}et\v{s}il and Pultr~\cite{LNP80}, who showed that $\pd(G)\le n-1$ and $\pd(G)=n-1$ if and only if $G=K_{n-1}+K_1$ (when $n\ge 5$). As we already mentioned above, $K_{n-1}+K_1$ is not a good candidate for maximizing $\tpd(G)$ since $\tpd(K_{n-1}+K_1)\le 2$.
Another natural graph to consider is $K_m+K_m$ when $n=2m$. We claim that $\tpd(K_m+K_m)=\pd(K_m+K_m)=m$.
Let the vertex sets of the two cliques be 
$U=\{u_1,\dots,u_m\}$ and $V=\{v_1,\dots,v_m\}$.
For the upper bound, assign to $u_i$ an all-$i$s string of length $m$, and to $v_i$ a string $s$ of length $m$ starting from $i$ in which $s_k=s_{k-1}+1 \mod m$ for all $k$. 
For the lower bound of $\tpd(K_m+K_m)$, suppose $f:U\cup V \to \mb{Z}^d$ (for some $d$) realizes $t:=\tpd(K_m+K_m)$.
Let 
$$
    C_1 := \sum_{1\le i<j\le m}\dist(f(u_i),f(u_j)) + \sum_{1\le i<j\le m}\dist(f(v_i),f(v_j)),
    \quad C_2 := \sum_{i,j=1}^m \dist(f(u_i),f(v_j)).
$$
We consider $C_1-C_2$.
Fix $k \in [d]$. 
For each $a \in \mb{Z}$, let $s_a$ be the number of $i\in[m]$ such that $f(u_i)_k=a$, and $t_a$ be the number of $i\in[m]$ such that $f(v_i)_k=a$.
The contribution to $C_1-C_2$ from the $k$th coordinates is given by 
$$
    \left(\binom{m}{2}-\sum_a \binom{s_a}{2}\right) + 
    \left(\binom{m}{2}-\sum_a \binom{t_a}{2}\right) -
    \left(m^2 - \sum_a s_at_a \right)
    = \sum_a s_at_a-\frac{s_a^2+t_a^2}{2}
    \le 0.
$$
Summing over all $k\in [d]$, we know $C_1 \le C_2$.
But then, $2\binom{m}{2}\cdot t\le C_1 \le C_2 \le m^2\cdot (t-1)$, showing $\tpd(K_m+K_m)=t \ge m$, as claimed.
It might be the case that $\tpd(G) \le \ceil{\frac{n}{2}}$ for all 
$n$-vertex graphs $G$.

\section{Concluding remarks and open problems} \label{sec: concluding}
In \cref{thm: general theorem for f} we showed there are at most $2^{t+1}$ pairs of $(a_i,b_i)$ such that $\dist(a_i,b_i)\ge t+1$ for all $i$ and $\dist(a_i,b_j) \le t$ for all $i\neq j$.
Consider any $t \ge 1$, the nontrivial case.
Notice that $\frac{V_{t+s,s}}{2^{t+s}}\ge \frac{V_{t+3,3}}{2^{t+3}}=\frac{t+4}{2^{t+3}}>2^{-t-1}$ for all $s \ge 3$.
Therefore, \cref{eq: main eq for general X} indicates that in the extremal case when there are $2^{t+1}$ such pairs, it must be that 
$\dist(a_i,b_i)\in\{t+1,t+2\}$ for all $i$. By taking $a_i\in\{0,1\}^{t+1}$ and $b_i=\overline{a_i}$, we construct $2^{t+1}$ such pairs with $\dist(a_i,b_i)=t+1$. Also by taking $a_i\in\{0,1\}^{t+2}$ with an even number of 1s and $b_i=\overline{a_i}$, we construct $2^{t+2}$ pairs with $\dist(a_i,b_i)=t+2$.
Thus, $\dist(a_i,b_i)=t+1$ and $\dist(a_i,b_i)=t+2$ are both possible in the extremal case. 
It would be interesting to have a further characterization of the extremal cases.

In the realm of the set-pair inequalities, the skew version also plays an important role; see \cite{Lovasz77,Lovasz79}.
Given $t$ and $X$, what is the largest $m$ such that there exist $n\ge t+1$ and $m$ pairs $a_i,b_i \in X^n, 1 \leq i\leq m$ so that 
$\dist(a_i,b_i) \ge t+1$ for all $i \in [m]$ and $\dist(a_i,a_j) \le t$ for all $1 \le i < j \le m$?
We suspect the answer is also $2^{t+1}$, and it would be interesting 
to try to adapt the dimension argument in order to prove it.

In \cite{Furedi84}, F\"uredi showed the set-pair inequality via the the following vector space generalization.
If $A_1,A_2,\dots,A_m$ are $a$-dimensional and $B_1,B_2,\dots,B_m$ are $b$-dimensional linear subspaces of $\mb{R}^n$ such that $\dim (A_i\cap B_j)\le k$ if and only if $i=j$, then $m \le \binom{a+b-2k}{a-k}$.
We wonder if there is a natural generalization of \cref{thm: main result for f} or even \cref{thm: general theorem for f} to vector spaces.

It will be interesting to study the threshold Prague dimension of graphs further. 
In particular, it will be nice to determine or estimate the maximum possible value of this invariant for a graph with $n$ vertices and maximum degree $\Delta$. 
In the case of Prague dimension, Eaton and R\"odl~\cite{ER96} showed the maximum possible dimension of a graph with $n$ vertices and maximum degree $\Delta$ is at most $O(\Delta\log n)$ and at least $\Omega(\frac{\Delta\log n}{\log \Delta + \log\log n})$.

\vspace{0.2cm}

\noindent
{\bf Acknowledgment} We thank Matija Buci\'c and Varun Sivashankar for helpful discussions. 
We also thank Istv\'an Tomon for his useful comments and in particular 
for suggesting to discuss the connection to the study of Radon numbers of  
abstract convexity spaces.

\end{document}